\newtheorem{proposition}{Proposition}%[section]
\newtheorem{lemma}[proposition]{Lemma}
\newtheorem{theorem}[proposition]{Theorem}
\theoremstyle{definition}
\newtheorem{definition}[proposition]{Definition}
\newtheorem{remark}[proposition]{Remark}
\newtheorem*{remark*}{Remark}
\newtheorem*{example*}{Example}
\newcommand{\HSp}{\operatorname{M_{heat}}}
\newcommand{\Dd}{\operatorname{D}}
\newcommand{\ff}{\operatorname{ff}}
\newcommand{\lb}{\operatorname{lb}}
\newcommand{\lp}{\left(}
\newcommand{\rp}{\right)}
\newcommand{\EE}{\mathcal{E}}
\newcommand{\PP}{\operatorname{P}}
\newcommand{\Diag}{\operatorname{Diag}}
\newcommand{\FP}{\operatorname{FP}}
\newcommand{\Res}{\operatorname{Res}}
\newcommand{\DD}{\operatorname{\Delta}}
\newcommand{\Ker}{\operatorname{Ker}}
\newcommand{\jj}{\operatorname{j}}
\newcommand{\xx}{\operatorname{x}}
\newcommand{\dvol}{\operatorname{dvol}}
\newcommand{\Spec}{\operatorname{Spec}}
\newcommand{\End}{\operatorname{End}}
\newcommand{\supp}{\operatorname{supp}}
\newcommand{\im}{\operatorname{Im}}
\newcommand{\id}{\operatorname{id}}
\newcommand{\Tr}{\operatorname{Tr}}
\begin{document}
\title[Heat asymptotic of powers of Laplacians]{Heat kernel asymptotics for real powers of Laplacians}
\author{Cipriana Anghel} \email{cianghel@imar.ro}
\thanks{Institute of Mathematics of the Romanian Academy, Bucharest, Romania}
\address{Institute of Mathematics of the Romanian Academy\\
Calea Grivi\c tei 21\\
010702 Bucharest\\ 
Romania}
\date{\today}
\begin{abstract}
We describe the small-time heat kernel asymptotics of real powers $\DD^r$, $r \in (0,1)$ of a non-negative self-adjoint generalized Laplacian $\DD$ acting on the sections of a hermitian vector bundle $\EE$ over a closed oriented manifold $M$. First we treat separately the asymptotic on the diagonal of $M \times M$ and in a compact set away from it. Logarithmic terms appear only if $n$ is odd and $r$ is rational with even denominator. We prove the non-triviality of the coefficients appearing in the diagonal asymptotics, and also the non-locality of some of the coefficients. In the special case $r=1/2$, we give a simultaneous formula by proving that the heat kernel of $\DD^{1/2}$ is a polyhomogeneous conormal section in $\EE \boxtimes \EE^* $ on the standard blow-up space $\HSp$ of the diagonal at time $t=0$ inside $[0,\infty)\times M \times M$.    
\end{abstract}

\thanks{Acknowledgements: I am grateful to my advisor Sergiu Moroianu for helpful discussions and for a careful reading of the paper. This work was partially supported from the project PN-III-P4-ID-PCE-2020-0794 funded by UEFSCDI}

\maketitle

\section{Introduction} 
Let $\DD$ be a self-adjoint generalized Laplacian acting on the sections of a hermitian vector bundle $\EE$ over an oriented, compact Riemannian manifold $M$ of dimension $n$. Denote by $p_t$ the heat kernel of $\DD$, i.e., the Schwartz kernel of the operator $e^{-t\DD}$. It is known since Minakshisundaram-Pleijel \cite{pleijel} that $p_t(x,y)$ has an asymptotic expansion as $t\searrow 0$ near the diagonal: 
\begin{equation}\label{intro}
p_t(x,y) \stackrel{t \searrow 0}{\sim}  t^{-n/2} e^{-\frac{d(x,y)^2}{4t}} \sum_{j=0}^{\infty} t^j \Psi_j (x,y),
\end{equation}
where $d(x,y)$ is the geodesic distance between $x$ and $y$, and the $\Psi_j$'s  are recursively defined as solutions of certain ODE's along geodesics (see e.g. \cite{berline}, \cite{gauduchon}). This asymptotic expansion applied to $\Dd^*\Dd$, where $\Dd$ is a twisted Dirac operator, plays a leading role in the heat kernel  proofs of the Atiyah-Singer index theorem (see \cite{berver}, \cite{bismut}, \cite{getzler}).

B\"{a}r and Moroianu \cite{art} studied the short-time asymptotic behavior of the heat kernel of $ \DD^{1/m}$, $m \in \mathbb N^*$, for a strictly positive self-adjoint generalized Laplacian $\DD$. They give explicit asymptotic formul\ae \ separately in the case when $t \searrow 0$ along the diagonal $\Diag \subset M \times M$, and when $t$ goes to $0$ in a compact set away from the diagonal. The asymptotic behavior depends on the parity of the dimension $n$ and of the root $m$. More precisely, logarithmic terms appear when $n$ is odd and $m$ is even. They use the Legendre duplication formula, and the more general Gauss multiplication formula for the $\Gamma$ function (see e.g. \cite{mellin}).  Another crucial argument in \cite{art} is to use integration by parts in order to show that the Schwartz kernel $q_{-s}$ of the pseudodifferential operator $\DD^{-s}$, $s \in \mathbb C$, defines a meromorphic function when restricted to the diagonal in $M \times M$. 

\subsection*{Small time heat asymptotic for real powers of $\DD$}
The purpose of this paper is to study the short-time asymptotic of the Schwartz kernel $h_t$ of the operator $e^{-t\DD^r}$, where $r \in (0,1)$ and $\DD$ is a non-negative self-adjoint  generalized Laplacian, like for instance $\DD=\Dd^*\Dd$ for a Dirac operator $\Dd$. We give separate formul\ae \ as $t$ goes to $0$ in $[0, \infty) \times \Diag$, and when $t \searrow 0$ in $[0,\infty) \times K$, where $K \subset M \times M$ is a compact set disjoint from the diagonal. In theorem \ref{asafaradiag}, we obtain that ${h_t}_{\vert [0,\infty) \times K} \in t \cdot \mathcal C^{\infty} \lp [0, \infty) \times K \rp$ is a smooth function vanishing at least to order $1$ at $\{ t=0 \}$. The asymptotic along the diagonal depends on the parity of $n$ (like in \cite{art}) and on the rationality of $r$. In theorem \ref{asdiag}, the most interesting case occurs when logarithmic terms appear. This happens only if $n$ is odd, $r=\frac{\alpha}{\beta}$ is rational, and the denominator $\beta$ is even. In that case,   
\begin{equation}\label{prototip}
\begin{aligned}
{h_t}_{\vert_{\Diag}} \stackrel{t \searrow 0}{\sim} {}&\sum_{j=0}^{(n-1)/2} t^{- \frac{n-2j}{2r}} \cdot A_{-\frac{n-2j}{2r}}  +
\sum^{\infty}_{\substack{j=1  \\ 
\mathclap{\alpha \nmid 2j+1}}}\mkern-6mu t^{\frac{2j+1}{2r}} \cdot A_{\frac{2j+1}{2r}} \\ 
{}&+ \sum^{\infty}_{\substack{k=1  \\ 
\mathclap{\beta \nmid  k}}}\mkern-6mu t^k \cdot A_k  + \sum^{\infty}_{\substack{l=1  \\ 
\mathclap{l \ odd}}}\mkern-6mu t^{l \frac{\beta}{2}} \log t \cdot B_{l}. 
\end{aligned}
\end{equation}
Similar expansions are proved in theorem \ref{asdiag} in all the other cases.
\subsection*{Comparison to previous results}
Fahrenwaldt \cite{fah} studied the off-diagonal short-time asymptotics of the heat kernel of $e^{-t f(P)}$, where $f: [0,\infty) \longrightarrow [0,\infty)$ is a smooth function with certain properties, and $P$ is a positive self-adjoint generalised Laplacian. The function $f(x)=x^r$, $r \in (0,1)$ does not satisfy the third condition in \cite[Hypothesis 3.3]{fah}, which seems to be crucial for the arguments and statements in that paper, so the results of \cite{fah} do not seem to apply here.
 
Duistermaat and Guillemin \cite{duis} give the asymptotic expansion of the heat kernel of $e^{-tP}$, where $P$ is a scalar positive elliptic self-adjoint pseudodifferential operator. The order of $P$ in \cite{duis} seems to be a positive integer. It is claimed in \cite{agrano} that this asymptotic holds true in the context of fiber bundles. Furthermore, Grubb \cite[Theorem 4.2.2]{gru} studied the heat asymptotics for $e^{-tP}$ in the context of fiber bundles when the order of $P$ is positive, not necessary an integer. In theorem \ref{asdiag}, we obtain the vanishing of some terms appearing in \cite[Corollary 4.2.7]{gru} in our particular case when $P=\DD^r$ is a real power of a self-adjoint non-negative generalized Laplacian $\DD$, $r \in (0,1)$. We also show that the remaining terms do not vanish in general.

\begin{theorem}\label{nontriv}
For each $r \in (0,1)$, none of the coefficients in the small time asymptotic expansion of $h_t$ appearing in theorem \ref{asdiag} vanishes identically for every generalized Laplacian $\DD$.
\end{theorem}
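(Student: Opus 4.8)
The plan is to realize every coefficient listed in theorem~\ref{asdiag} as genuinely non-zero on an explicit example. Two families of test operators on a flat torus $\mathbb{T}^n$, with $\EE$ the trivial line bundle, will suffice: the Schr\"odinger operators $\DD=\DD_{g_0}+V$ with $g_0$ flat and $V$ a positive constant, and the flat Laplacian $\DD=\DD_{g_0}$ itself. The preliminary step is to unwind the formulas of theorem~\ref{asdiag}: starting from Bochner's subordination formula, or from the Mellin representation
\[
h_t(x,x)=\Pi_{\Ker\DD}(x,x)+\frac{1}{2\pi i}\int_{\Re z=c}\Gamma(z)\,t^{-z}\,\zeta_{\DD}(rz,x,x)\,dz ,
\]
with $\zeta_{\DD}(w,x,x)$ the meromorphic continuation of $\frac{1}{\Gamma(w)}\int_0^\infty s^{w-1}\bigl(p_s(x,x)-\Pi_{\Ker\DD}(x,x)\bigr)\,ds$, one sees that each coefficient is a universal constant times one of two things: a diagonal heat coefficient $\Psi_j(x,x)$ of $\DD$ --- this covers the singular terms $t^{-(n-2j)/(2r)}$, the regular fractional powers, the logarithmic coefficients $B_l$, and, for $n$ even, the $t^{0}$-coefficient and the integer powers $t^k$ with $\beta\mid k$ (using that the diagonal local zeta of a Laplace-type \emph{differential} operator at a non-positive integer is a locally computable multiple of a heat coefficient); or a value $\zeta_{\DD}(-rk,x,x)$ of the local zeta at a \emph{non-integer} negative argument --- this covers the genuinely non-local integer powers $A_k$ with $\beta\nmid k$. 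The universal constant is a quotient of $\Gamma$-values (with an extra residue factor in the logarithmic case), and one checks directly that it is non-zero precisely for the exponents that do not produce a logarithm, since the collision of a pole of $\Gamma(z)$ with a pole of $z\mapsto\zeta_{\DD}(rz,\cdot)$ --- the source of the logarithms --- is exactly where that constant degenerates, which is what the arithmetic conditions $\alpha\nmid 2j+1$, $\beta\nmid k$, $l$ odd, \dots\ of theorem~\ref{asdiag} encode.

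For the coefficients of the first type, take $\DD=\DD_{g_0}+V$ on $\mathbb{T}^n$ with $V>0$ a constant. Then $e^{-t\DD}=e^{-tV}e^{-t\DD_{g_0}}$, and since the diagonal heat coefficients of $\DD_{g_0}$ are $\Psi_j(x,x)=(4\pi)^{-n/2}\delta_{j,0}$ (the metric being flat), those of $\DD$ are $\Psi_j(x,x)=(4\pi)^{-n/2}\frac{(-V)^j}{j!}\neq 0$ for every $j$. Hence every coefficient of the first type is non-zero for this $\DD$; in particular this realizes non-trivially the logarithmic coefficients $B_l$ (which are thereby seen to be \emph{local}) and, for $n$ even, the $t^0$- and $t^k$-coefficients with $\beta\mid k$.

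For the non-local integer powers $A_k$ with $\beta\nmid k$, take the flat Laplacian $\DD=\DD_{g_0}$ on $\mathbb{T}^n=\mathbb{R}^n/\mathbb{Z}^n$. Now $\Psi_j(x,x)=0$ for $j\geq 1$, so only the leading term $t^{-n/(2r)}$ and integer powers survive in $h_t(x,x)$, and from $\zeta_{\DD}(w,x,x)=\vol(\mathbb{T}^n)^{-1}(4\pi^2)^{-w}Z_n(2w)$ --- where $Z_n(w)=\sum_{0\neq m\in\mathbb{Z}^n}|m|^{-w}$ is the Epstein zeta function of the standard lattice --- one computes $A_k=\frac{(-1)^k}{k!}\,\vol(\mathbb{T}^n)^{-1}(4\pi^2)^{rk}\,Z_n(-2rk)$. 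By the functional equation, $Z_n(-2rk)=\pi^{-2rk-n/2}\,\frac{\Gamma(n/2+rk)}{\Gamma(-rk)}\,Z_n(n+2rk)$, where $Z_n(n+2rk)>0$ and $\Gamma(n/2+rk)$ is finite and non-zero, while $\frac{1}{\Gamma(-rk)}\neq 0$ if and only if $-rk\notin\mathbb{Z}_{\leq 0}$, i.e.\ $\beta\nmid k$. Thus $Z_n(-2rk)\neq 0$ exactly for the values of $k$ for which $A_k$ appears (and the same computation exhibits the vanishing of the $t^k$-coefficient when $\beta\mid k$ and $n$ is odd, consistent with \eqref{prototip}). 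The cases of theorem~\ref{asdiag} in which no logarithm occurs --- in particular $r$ irrational --- are covered verbatim by these same two examples.

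The step I expect to be the main obstacle is the third one: pinning down exactly which value of the local zeta function multiplies each integer power $t^k$ and verifying its non-vanishing away from the logarithmic locus. On the flat torus this reduces to the non-vanishing of the Epstein zeta function at $-2rk$, and the Epstein functional equation resolves it through the transparent dichotomy $rk\in\mathbb{Z}$ versus $rk\notin\mathbb{Z}$ --- equivalently $\beta\mid k$ versus $\beta\nmid k$, which is exactly the case distinction already present in the statement of theorem~\ref{asdiag}.
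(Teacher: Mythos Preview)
Your proof is correct and takes a genuinely different route for the integer-power coefficients $A_k$ with $rk\notin\mathbb N$. The paper argues via the one-parameter family $\DD+\xi$: differentiating $\zeta_{\DD+\xi}(s)$ enough times in $\xi$ produces $(-1)^k s(s+1)\cdots(s+k-1)\,\zeta_{\DD+\xi}(s+k)$, which for $s+k>n/2$ is a convergent sum of strictly positive terms, so $\xi\mapsto\zeta_{\DD+\xi}(-rj)$ cannot be identically zero and hence $q_{rj}^{\xi_0}(x_0,x_0)\neq 0$ for some $\xi_0,x_0$. You instead work on a single explicit example, the flat torus, and invoke the Epstein zeta functional equation to see that $Z_n(-2rk)\neq 0$ exactly when $rk\notin\mathbb N$. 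This is more constructive (one operator rather than a family), and it happens to be the very computation the paper carries out later in the non-locality section, so your argument effectively merges the two sections. For the heat-coefficient-based terms both approaches amount to producing a Laplacian with all $a_j\neq 0$; your choice $\DD_{g_0}+V$ with $V>0$ constant makes this explicit, whereas the paper appeals to the known non-triviality of heat coefficients.

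One point deserves an extra sentence. Your preliminary classification ``each coefficient is a universal constant times either a heat coefficient or a local-zeta value'' is not literally true for the coefficients $A_{l\beta/2}$ in case~(3): by the formula in theorem~\ref{asdiag} these involve the \emph{finite part} $\FP_{s=-l\beta/2}\bigl(\Gamma(rs)q_{-rs}(x,x)\bigr)$ plus a heat-coefficient term, reflecting the double pole. Your argument is nonetheless valid, because on the flat torus all $a_j$ with $j\geq 1$ vanish, so the would-be double pole is actually simple, the finite part reduces to the ordinary value $\Gamma(-l\alpha/2)\,q_{l\alpha/2}(x,x)$, the extra heat-coefficient term drops out, and your formula $A_k=\tfrac{(-1)^k}{k!}\cdot\mathrm{const}\cdot Z_n(-2rk)$ holds on the nose. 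Spelling out this reduction would make the proof watertight; the paper's own treatment of this subcase is comparably terse.
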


The logarithmic coefficients $B_l$ and the coefficients $A_j$ for $j \notin \mathbb Z$ can be computed in terms of the  heat coefficients for $e^{-t\Delta}$ appearing in \eqref{intro}. It is well-known that the heat coefficients of a generalized Laplacian are locally computable in terms of the curvature of the connection on $\EE$, the Riemannian metric of $M$ and their derivatives (see e.g. \cite{berline}). 
This is no longer the case for the coefficients of positive integer powers of $t$ from theorem \ref{asdiag} as we shall see now.

By applying theorem \ref{asdiag} for $r \in (0,1)$ and a set of geometric data, namely a hermitic vector bundle $\EE$ over an oriented, compact Riemannian manifold $(M,g)$, a metric connection $\nabla$ and an endomorphism $F \in \End \EE$, $F^*=F$, we produce an endomorphism $A_l \lp M,g,\EE,h_{\EE},\nabla,F \rp \in \mathcal C^{\infty} \lp M, \End \EE  \rp$ for each index $l$ appearing in \eqref{prototip}.

\begin{definition}\label{def1}
\begin{itemize}
\item[$i)$]  We say that a function $A$ which associates to any set of geometric data $(M,g,\EE,h_{\EE},\nabla, F)$ a section in $\mathcal C^{\infty}(M,\End \EE)$ is \emph{locally computable} if for any two sets of geometric data $(M,g,\EE,h_{\EE},\nabla, F)$,  $(M',g',\EE',h_{\EE'}, \nabla', F')$ which agree on an open set (i.e., there exist an isometry $\alpha: U \longrightarrow U'$ between two open sets $U \subset M$, $U' \subset M'$, and a metric isomorphism $\beta : \EE_{\vert_U} \longrightarrow \EE'_{\vert_{U'}}$ which preserves the connection and $\beta_x \circ F_x \circ \beta_{\alpha(x)}^{-1}=F'_{\alpha(x)}$), we have  
\[ \beta_x \circ A_x \circ \beta_{\alpha(x)}^{-1} = A_{\alpha(x)},   \]
for any $x \in U$.
\item[$ii)$] A scalar function $a$ defined on the set of all geometric data $(M,g,\EE,h_{\EE},\nabla,F)$ with values in $\mathbb C$ is called \emph{locally computable} if there exists a locally computable function $C$  as in $i)$ above such that $a=\int_M \Tr C \dvol_g$ for any $\lp M,g, \EE, h_{\EE}, \nabla, F  \rp$.
\item[$iii)$] A function $A$ as in $i)$ is called \emph{cohomologically locally computable} if there exists a locally computable function $C$ as in $i)$ such that for any $\lp M,g, \EE, h_{\EE}, \nabla, F  \rp$,
\[ \left[ \Tr A \dvol_g  \right] = \left[ \Tr C \dvol_g  \right] \in H^n_{dR} \lp M \rp  . \]
\end{itemize}
\end{definition}
\begin{remark}\label{rrr}
\begin{itemize}
\item[$i)$] If a function $A$ is locally computable,  then the integral $a:=\int_{M} \Tr A \dvol_g$ is locally computable.
\item[$ii)$] If a function $A$ is cohomologically locally computable,  then $a:=\int_{M} \Tr A \dvol_g$ is locally computable.
\end{itemize}
\end{remark}
\begin{theorem}\label{nonloc}
If $r$ is irrational, the heat coefficients $A_k$ in \eqref{prototip} (more generally in theorem \ref{asdiag}) are not locally computable for integer $k \geq 1$. If $r=\frac{\alpha}{\beta}$ is rational, then $A_k$ are not locally computable for $k \in \mathbb N \setminus \{ l \beta : l \in \mathbb N \}$. All the other coefficients can be written in terms of the heat coefficients of $e^{-t\DD}$, hence they are locally computable.
\end{theorem}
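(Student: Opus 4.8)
The plan is to read the ``positive part'' directly off the formul\ae{} of Theorem \ref{asdiag}, and to establish the non-locality by a scaling argument on flat tori. The subordination identity $e^{-t\DD^r}=\int_0^\infty f_{t,r}(s)\,e^{-s\DD}\,ds$, with $f_{t,r}$ the one-sided $r$-stable density underlying Theorem \ref{asdiag}, shows that every coefficient attached to a non-integral power of $t$, and every logarithmic coefficient $B_l$, is a fixed (dimension- and $r$-dependent) constant times a heat coefficient $\Psi_j(x,x)$ of $e^{-t\DD}$ from \eqref{intro}; by the classical theory these are locally computable. Moreover the coefficient of $t^k$, $k\in\mathbb N$, equals a local expression in the $\Psi_j$'s plus $\tfrac{(-1)^k}{k!}$ times the zeta-regularized value on the diagonal of the Schwartz kernel of the complex power $\DD^{kr}$, whose integral over $M$ is $\zeta_{\DD}(-kr)$. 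When $r=\tfrac{\alpha}{\beta}$ and $\beta\mid k$ we have $kr=(k/\beta)\alpha\in\mathbb N$, so $\DD^{kr}$ is a differential operator: its regularized diagonal kernel vanishes if $n$ is odd and is a heat coefficient if $n$ is even, so $A_k$ is locally computable. It remains to prove that in all other cases --- every $k\ge1$ when $r$ is irrational, and every $k$ with $\beta\nmid k$, equivalently $kr\notin\mathbb Z$ since $\gcd(\alpha,\beta)=1$ --- the coefficient $A_k$ is not locally computable.

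For this I would test against flat tori. Let $\DD$ be the scalar Laplacian on the trivial line bundle over $\mathbb T_L=\mathbb R^n/L$, with trivial connection and $F=0$. Then $h_t(x,x)$ is constant in $x$ and equals $\vol(\mathbb T_L)^{-1}\Tr e^{-t\DD^r}$; expanding $\Tr e^{-t\DD^r}=\dim\ker\DD+\tfrac{1}{2\pi i}\int_{(c)}\Gamma(z)\,t^{-z}\,\zeta_{\DD}(rz)\,dz$ and moving the contour to the left, the only pole over $z=-k$ ($k\ge1$) is the simple pole of $\Gamma$, since the Epstein-type zeta function $\zeta_{\DD}$ has its sole pole at $n/2$ and is regular at the negative number $-kr$; hence $A_k|_{\mathbb T_L}=\tfrac{(-1)^k}{k!\,\vol(\mathbb T_L)}\,\zeta_{\DD}(-kr)$, and no logarithms occur. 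Now suppose $A_k$ were locally computable. A small metric ball in $\mathbb T_L$ (with the trivial bundle and $F=0$) is isometric, compatibly with all the data, to a fixed Euclidean ball; so Definition \ref{def1}$i)$ forces $A_k|_{\mathbb T_L}$ to be one and the same constant for every lattice $L$, i.e. $\zeta_{\DD_L}(-kr)=c\cdot\vol(\mathbb T_L)$ with $c$ independent of $L$. Replacing $L$ by $\mu L$ multiplies every eigenvalue of $\DD$ by $\mu^{-2}$, hence $\zeta_{\DD_L}(-kr)$ by $\mu^{-2kr}$, while $\vol(\mathbb T_L)$ is multiplied by $\mu^n$; since $-2kr<0<n$, this is possible only if $\zeta_{\DD_L}(-kr)=0$ for every $L$.

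It therefore remains to exhibit one flat torus with $\zeta_{\DD_L}(-kr)\ne0$ whenever $kr\notin\mathbb N$. Take $L=\mathbb Z^n$, so that $\zeta_{\DD}(s)=(4\pi^2)^{-s}E(s)$ with $E(s)=\sum_{v\in\mathbb Z^n\setminus\{0\}}|v|^{-2s}$ the (self-dual) Epstein zeta function, whose completion $\xi(s)=\pi^{-s}\Gamma(s)E(s)$ is holomorphic away from simple poles at $0$ and $n/2$ and satisfies $\xi(s)=\xi(\tfrac n2-s)$. Then $\xi(-kr)=\xi(\tfrac n2+kr)=\pi^{-(n/2+kr)}\Gamma(\tfrac n2+kr)\,E(\tfrac n2+kr)>0$, because $\tfrac n2+kr$ lies in the half-plane of absolute convergence, where $E$ is a convergent series of strictly positive terms. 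Since $kr\notin\mathbb N$, the number $-kr$ is a negative non-integer, so $\Gamma(-kr)\ne0$, and hence $E(-kr)=\pi^{kr}\xi(-kr)/\Gamma(-kr)\ne0$, i.e. $\zeta_{\DD}(-kr)\ne0$. This contradicts the conclusion of the scaling argument and finishes the proof: if $r$ is irrational every integer $k\ge1$ has $kr\notin\mathbb N$, and if $r=\alpha/\beta$ the hypothesis $\beta\nmid k$ means exactly $kr\notin\mathbb Z\supseteq\mathbb N$.

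The part that requires care is the reduction in the first paragraph: one must organize the subordination computation of Theorem \ref{asdiag} so that the global summand of $A_k$ is cleanly exhibited with a provably nonzero prefactor (namely $\tfrac{(-1)^k}{k!}$, obtained from the stable-tail coefficient $c_k(r)$ and $\Gamma(-kr)$ via the reflection formula), so that testing against flat tori genuinely isolates a nonzero multiple of $\zeta_{\DD}(-kr)$ rather than something that might accidentally collapse to a local quantity; once that bookkeeping is in place, the scaling argument and the Epstein-zeta non-vanishing are routine.
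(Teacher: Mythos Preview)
Your approach is essentially the same as the paper's: test on flat tori, reduce the coefficient $A_k$ there to a nonzero multiple of the Epstein zeta value $\zeta_n(-kr)$, use scaling to force this value to zero if $A_k$ were local, and then invoke the functional equation $\pi^{-s}\Gamma(s)\zeta_n(s)=\pi^{s-n/2}\Gamma(\tfrac n2-s)\zeta_n(\tfrac n2-s)$ to show $\zeta_n(-kr)\neq0$ whenever $kr\notin\mathbb N$. The paper packages the scaling slightly differently---it perturbs the metric on a fixed torus inside a small open set so that the new metric is globally isometric to a rescaled one, and then compares the two metrics at a point where they agree---whereas you compare two different tori via the local isometry of small flat balls; these are cosmetic variants of the same idea. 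One remark: your first paragraph is imprecise in case~(3) of Theorem~\ref{asdiag}, since for $k=l\beta/2$ with $l$ odd the kernel $q_{-rs}(x,x)$ has a genuine pole at $s=-k$ and $A_k$ is given by a finite-part expression rather than simply $\tfrac{(-1)^k}{k!}q_{rk}(x,x)$; however, on flat tori the relevant heat coefficients $a_j$ ($j\ge1$) vanish, the pole disappears, and your uniform formula $A_k=\tfrac{(-1)^k}{k!\,\vol}\,\zeta_{\DD}(-kr)$ holds, so the argument is unaffected. The paper additionally proves that the $A_k$ are not even \emph{cohomologically} local (Definition~\ref{def1}\,$iii)$), which your scaling argument also yields with no extra work but which you do not state.
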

Consider the asymptotic expansion in \cite[Corollary~2.2']{duis} for a scalar \emph{admissible} operator, i.e. an elliptic, self-adjoint, positive pseudodifferential operator $P$ of positive \emph{integer} order $d$:
\[
 e^{-tP} \stackrel{t \searrow 0}{\sim} \sum_{l=0}^{\infty} A_l(P) t^{(l-n)/d} + \sum_{k=1}^{\infty} B_k(P) t^k \log t .  
 \]
Gilkey and Grubb \cite[Theorem 1.4]{gilgrubb} proved that the coefficients $a_l(P)$ for $l \geq 0$ and $b_k(P)$ for $k \geq 1$ from the corresponding small-time heat trace expansion
\begin{equation}\label{tras}
 \Tr e^{-tP} \stackrel{t \searrow 0}{\sim} \sum_{l=0}^{\infty} a_l(P) t^{(l-n)/d} + \sum_{k=1}^{\infty} b_k(P) t^k \log t .  
 \end{equation}
are generically non-zero in the above class of admissible operators.
In theorem \ref{nontriv}, we prove the same type of statement. However, in our case the order of the operator $\Delta^r$ is $2r$, thus it is integer only for $r = 1/2$.  Even in this case, the non-vanishing result in theorem \ref{nontriv} is not a consequence of \cite[Theorem 1.4]{gilgrubb}, since in our case we do not consider the whole class of admissible operators of fixed integer order $d$ in the sense of Gilkey and Grubb \cite{gilgrubb}, but the smaller class of square roots of generalised Laplacians.

Furthermore, in \cite[Theorem~1.7]{gilgrubb} is proved that the coefficients $a_l(P)$ in \eqref{tras} corresponding to $t^{(l-n)/d}$, for $(l-n)/d \in \mathbb N$ are not locally computable. Remark that the meaning of "locally computable" in \cite{gilgrubb} is different from our definition \ref{def1}. More precisely, in the definition of Gilkey and Grubb, a locally computable function $A$ has to be a smooth function in the jets of the homogeneous components of the total symbol of the operator. A locally computable coefficient in the sense of Gilkey Grubb \cite{gilgrubb} is clearly locally computable in the sense of definition \ref{def1} $ii)$. 

For $r=1/2$, B\"{a}r and Moroianu \cite{art} remark that for odd $k=1,3,...$, the coefficients $A_k$ in \eqref{prototip} corresponding to $t^k$ appear to be non-local. In section \ref{nonlocal}, we clarify this remark by proving that they are indeed non-local in the sense of definition \ref{def1} $i)$ (theorem \ref{nonloc}). In fact, we prove that the $A_k$'s are not \emph{cohomologically} local. By remark \ref{rrr} $ii)$, it also follows that the integrals $a_k:=\int_M \Tr A_k \dvol_g$ are not locally computable in the sense of definition \ref{def1} $ii)$. Therefore,  the $a_k$'s for odd $k$ are also not locally computable in the sense of Gilkey and Grubb \cite{gilgrubb}.

For  $d=1$, the non local coefficients in the heat expansion \eqref{tras} in \cite{gilgrubb} are $a_{n+1}, a_{n+2},...$, while in our case corresponding to $r=d/2=1/2$, the non local coefficients are $a_1,a_3,...$. Despite some formal resemblances, it appears therefore that the results of the present paper are quite different from those of \cite{gilgrubb}.

\subsection*{The heat kernel as a conormal section}
Recall that a smooth function $f$ on the interior of a manifold with corners is said to be \emph{polyhomogeneous conormal} if for any boundary hypersurface given by a boundary defining function $\theta$, $f$ has an expansion with terms of the form $\theta^k \log^l \theta $ towards $\{ \theta=0 \}$ (only natural powers $l$ are allowed). In \cite{melrose}, Melrose introduced the heat space $M_H^2$ by performing a parabolic blow-up of the diagonal in $M \times M$ at time $t=0$. The new space is a manifold with corners with boundary hypersurfaces given by the boundary defining functions $\rho$ and $\omega_0$. Then the heat kernel $p_t$ has the form $\rho^{-n} \mathcal C^{\infty}(M_H^2)$, and it vanishes rapidly at $\{ \omega_0=0 \}$ (see \cite[Theorem~7.12]{melrose})). 

In the special case $r=1/2$, we are able to give a simultaneous formula for the asymptotic behavior of $h_t$ as $t$ goes to zero \emph{both} on the diagonal and away from it. We can understand better the heat operator $e^{-t \DD^{1/2}}$ on a \emph{linear blow-up} heat space $\HSp$, the usual blow-up of $\{ 0 \} \times \Diag$ in $[0,\infty) \times M \times M$. The new added face is called the \emph{front face} and we denote it $\ff$, while the lift of the old boundary is the \emph{lateral boundary}, denoted $\lb$.
\begin{theorem} \label{simultan}
If $n$ is even,  then the Schwartz kernel $h_t$ of the operator $e^{-t\DD^{1/2}}$ belongs to  $ \rho^{-n}\omega_0 \cdot  \mathcal C^{\infty} (\HSp) $, while if $n$ is odd, $h_t \in \rho^{-n} \omega_0 \cdot \mathcal C^{\infty} (\HSp) + \rho \log \rho \cdot \omega_0 \cdot \mathcal C^{\infty} (\HSp) $.
\end{theorem}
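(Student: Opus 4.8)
The plan is to reduce everything to the well–understood heat kernel $p_s$ of $\DD$ by means of Bochner's subordination formula for the $\tfrac12$-stable subordinator,
\[
e^{-t\sqrt{\lambda}}=\frac{t}{2\sqrt{\pi}}\int_0^\infty e^{-s\lambda}\,s^{-3/2}\,e^{-t^2/(4s)}\,ds\qquad(\lambda\ge 0,\ t>0),
\]
which, via the spectral theorem and Fubini for the Schwartz kernels (the $s$-integral converges absolutely for each $(x,y)$ and $t>0$, since $\|e^{-s\DD}\|\le1$ and $p_s(x,y)$ is continuous in $(s,x,y)$ for $s>0$), gives
\[
h_t(x,y)=\frac{t}{2\sqrt{\pi}}\int_0^\infty p_s(x,y)\,s^{-3/2}\,e^{-t^2/(4s)}\,ds .
\]
I would then invoke the classical form of $p_s$ near the diagonal (see \eqref{intro} and \cite{melrose}): writing $d=d(x,y)$, there is a cut-off $\chi$ supported within the injectivity radius with $p_s=\chi\cdot s^{-n/2}e^{-d^2/(4s)}\,\Theta(s,x,y)+r(s,x,y)$, where $\Theta$ is smooth on $[0,1]\times(\text{neighbourhood of }\Diag)$ with Taylor series $\Theta\sim\sum_{j\ge0}s^j\Psi_j$ at $s=0$ (the $\Psi_j$ of \eqref{intro}), and $r$ together with all its derivatives is $O(s^\infty)$ off the diagonal. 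Splitting $\int_0^\infty=\int_0^1+\int_1^\infty$, the tail $\int_1^\infty$ contributes a term in $t\cdot\mathcal C^\infty([0,\infty)\times M\times M)$ because $p_s$ is uniformly bounded in every $\mathcal C^k$ for $s\ge1$ and $s^{-3/2}e^{-t^2/(4s)}$ is smooth and integrable there; the contribution of $r$ to $\int_0^1$ is likewise in $t\cdot\mathcal C^\infty$, since writing $r=s^{M}r_M$ with $r_M$ bounded and smooth makes $s^{M-3/2}e^{-t^2/(4s)}$ arbitrarily regular in $(s,t)$ once $M$ is large. Pulled back these lie in $\rho^{-n}\omega_0\cdot\mathcal C^\infty(\HSp)$ because $t\in\rho\,\omega_0\cdot\mathcal C^\infty(\HSp)$ and $\rho$ is bounded; the same estimate away from $\ff$ near $\lb$, where $p_s=O(s^\infty)$, reproves $h_t\in t\cdot\mathcal C^\infty$ there, i.e.\ Theorem \ref{asafaradiag}.

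The decisive computation is at the front face. Near a point of $\{0\}\times\Diag$ I would use normal coordinates $z$ centred at $x$, so that $d=|z|$ and $\HSp$ is the blow-up of $\{t=0,\,z=0\}$ with $\rho^2=t^2+|z|^2$ (up to a smooth positive factor) and $\omega_0=t/\rho$. In the parametrix part $\tfrac{t\chi}{2\sqrt\pi}\int_0^1 s^{-(n+3)/2}\Theta(s,x,y)\,e^{-(d^2+t^2)/(4s)}\,ds$ the two Gaussians coalesce,
\[
e^{-d^2/(4s)}\,e^{-t^2/(4s)}=e^{-\rho^2/(4s)},
\]
and the substitution $s=\rho^2 v$ eliminates all angular dependence, turning this part of $h_t$ into
\[
\frac{\omega_0\,\chi}{2\sqrt\pi}\,\rho^{-n}\sum_{j\ge0}\rho^{2j}\Psi_j(x,y)\int_0^{1/\rho^2}v^{\,j-(n+3)/2}e^{-1/(4v)}\,dv
\]
(understood with a Taylor remainder in $\Theta$). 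After $w=1/(4v)$ the $v$-integral becomes $4^{(n-1)/2-j}\int_{\rho^2/4}^\infty w^{(n-1)/2-j}e^{-w}\,dw$. For $n$ even the exponent is never an integer, the integral converges as $\rho\to0$ to $4^{(n-1)/2-j}\Gamma(\tfrac{n+1}{2}-j)\ne0$ with a tail correction which is a convergent power series in $\rho^2$, and the sum reassembles into $\rho^{-n}\omega_0$ times a smooth function — no logarithms. For $n$ odd, with $m=\tfrac{n+1}{2}\in\mathbb N$, the terms $j<m$ behave likewise and give the pole part $\rho^{2j-n}\omega_0\cdot(\text{smooth})$, while for $j\ge m$ the integrand $w^{(n-1)/2-j}e^{-w}$ has a non-integrable singularity at $w=0$, and truncating at $\rho^2/4$ forces a logarithm through $\int_{\rho^2/4}^\infty w^{-1}e^{-w}\,dw=-2\log\rho+O(1)$ and its iterated primitives; the lowest such term, from $j=m$, is exactly $\rho\log\rho\cdot\omega_0\cdot\Psi_{(n+1)/2}(x,y)$ up to an explicit constant, and the higher ones $\rho^3\log\rho,\dots$ are absorbed into $\rho\log\rho\cdot\omega_0\cdot\mathcal C^\infty(\HSp)$. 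All remaining non-logarithmic contributions are $O(\rho)\cdot\omega_0$ and hence lie in $\rho^{-n}\omega_0\cdot\mathcal C^\infty(\HSp)$.

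Finally one must handle the Taylor remainder of $\Theta$ and assemble. Retaining $N$ terms, the remainder contributes $\tfrac{\rho\,\omega_0\,\chi}{2\sqrt\pi}\int_0^1 s^{N+1-(n+3)/2}\,\Theta^{(N+1)}(s,x,y)\,e^{-\rho^2/(4s)}\,ds$, which is $O(\rho\,\omega_0)$ and of class $\mathcal C^{c(N)}$ near $\ff$ with $c(N)\to\infty$; this is precisely the input needed to conclude, by the standard asymptotic-summation argument, that $h_t$ is polyhomogeneous conormal on $\HSp$ with the expansion found above, whose terms all lie in the asserted spaces. Combining with the off-front-face analysis yields $h_t\in\rho^{-n}\omega_0\cdot\mathcal C^\infty(\HSp)$ for $n$ even and $h_t\in\rho^{-n}\omega_0\cdot\mathcal C^\infty(\HSp)+\rho\log\rho\cdot\omega_0\cdot\mathcal C^\infty(\HSp)$ for $n$ odd. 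The hard part, I expect, is not any single step but carrying out the analysis of the $v$-integrals \emph{uniformly up to the corner} $\ff\cap\lb$ and correctly pinning down the logarithmic coefficients; everything else is bookkeeping, the one genuinely new ingredient being the collapse $e^{-d^2/(4s)}e^{-t^2/(4s)}=e^{-\rho^2/(4s)}$, which is exactly what makes the exponent $r=\tfrac12$ — and only $r=\tfrac12$ — describable by a single formula valid both on and off the diagonal.
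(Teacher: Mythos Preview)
Your proposal is correct and takes essentially the same approach as the paper. The only difference is that you invoke Bochner's subordination formula directly as a known identity, whereas the paper first derives it via the Mellin and inverse Mellin transforms combined with the Legendre duplication formula; from that point on the two arguments coincide --- the Gaussians coalesce to $e^{-\rho^2/(4s)}$, a substitution produces the incomplete Gamma functions $\Gamma\bigl(\tfrac{n+1}{2}-j,\tfrac{\rho^2}{4}\bigr)$, and the parity of $n$ decides whether logarithmic terms appear.
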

Theorem \ref{simultan} improves the results of \cite{art} twofold: it holds true for non-negative generalized Laplacians, and it describes the small-time asymptotic expansion of $h_t$ simultaneously on the diagonal and away from it by showing that $h_t$ is a polyhomogeneous conormal section on $\HSp$ with values in $\EE \boxtimes \EE^*$.

Note that throughout the paper, integral kernels act on sections by integration with respect to the fixed Riemannian density from $M$ in the second variable, so $h_t$ does not contain a density factor. We feel that in the present context this exhibits more clearly the asymptotic behavior.

Based on the study of the case $r=1/2$ and on the separate asymptotic expansions of the heat kernel $h_t$ of $\DD^r$, $r \in (0,1)$ as $t$ goes to $0$ given in theorems \ref{asafaradiag} and \ref{asdiag}, we can conjecture that the heat kernel $h_t$ is a polyhomogeneous conormal function for \emph{all} $r \in (0,1)$ on a ``\emph{transcendental}" heat blow-up space $M^r_{heat}$ depending on $r$. We leave this as a future project. 

\section{The heat kernel of a generalized Laplacian}
Let $\EE$ be a hermitian vector bundle over a compact Riemannian manifold $M$ of dimension $n$. Consider $\DD$ to be a generalized Laplacian, i.e., a second order differential operator which satisfies
\[ \sigma_2(\DD)(x,\xi)=|\xi|^2 \cdot \id_{\EE}. \]  
For example, if $\nabla$ is a connection on $\EE$ and $F \in \Gamma(\End \EE)$, $F^*=F$, then $\nabla^*\nabla+F$ is a symmetric generalized Laplacian on $\EE$.

Suppose that $\DD$ is self-adjoint. Since $M$ is compact, the spectrum of $\DD$ is discrete and $L^2(M,\EE)$ splits as an orthogonal Hilbert direct sum: 
\[ L^2(M,\EE)=\bigoplus_{\lambda \in \Spec \DD}^{\perp} E_{\lambda},  \]
where $E_{\lambda}$ is the eigenspace corresponding to the eigenvalue $\lambda$ of $\DD$. Moreover, $\dim E_{\lambda} < \infty$ and by elliptic regularity, the eigensections are smooth (see e.g. \cite{spinorial}). Let $e^{-t\DD}$ be the \emph{heat operator} defined  as: 
\[ e^{-t\DD}\Phi=e^{-t\lambda} \Phi, \]
for any $\Phi \in E_{\lambda}$, $\lambda \in \Spec \DD$. 
\begin{definition}
The \emph{heat kernel} of a self-adjoint elliptic pseudo-differential operator $P$ acting on the sections of $\EE$ is the Schwartz kernel of the operator $e^{-tP}$.
\end{definition}
If we denote by $\lbrace \Phi_j \rbrace$ an orthonormal Hilbert basis of $\DD$-eigensections, then the heat kernel $p_t(x,y)$ satisfies:
\[ p_t(x,y)=\sum_{j}e^{-t\lambda_j} \Phi_j(x) \otimes \Phi_j^*(y), \] 
in $\mathcal C^{\infty} \lp (0,\infty) \times M \times M \rp$.

Recall that the $L^2$-product of two sections $s_1, s_2 \in \Gamma(\EE)$ is  given by \[ \langle s_1,s_2 \rangle_{L^2(\EE)}=\int_M h_{\EE}(s_1,s_2) \dvol_g ,\] where $g$ is the metric on $M$ and $h_{\EE}$ is the hermitian product on $\EE$. 

Let $y \in M$ be a fixed point. We work in geodesic normal coordinates defined by the exponential map 
\[ \exp_y: T_yM\longrightarrow M. \]
Since $M$ is compact, there exists a global injectivity radius $\epsilon$. For $x$ close enough to $y$ ($d(x,y) \leq \epsilon$), take $\xx \in T_yM$ the unique tangent vector of length smaller than $\epsilon$ such that $x=\exp_y\xx$. Let
\[ \jj(\xx)=\frac{\exp_y^* dx}{d\xx}, \]
namely the pull-back of the volume form $dx$ on $M$ through the exponential map $\exp_{y}$ is equal with $\jj(\xx)d\xx$. More precisely,
\[ \jj(\xx)=\vert \det \lp d_{\xx}\exp_{x_0} \rp \vert={\det }^{1/2} \lp g_{ij}(\xx) \rp .\]
Denote by $\tau_{x}^{y}: \EE_x \longrightarrow \EE_y$ the parallel transport along the unique minimal geodesic $x_s=\exp_{y} (s\xx)$, where $s \in [0,1]$, which connects the points $x$ and $y$. The heat kernel $p_t(x,y)$ belongs to the space $ \mathcal C^{\infty} \lp (0,\infty) \times M \times M, \EE_x \otimes \EE_y^* \rp$ and $p_t(x,y)$ satisfies the heat equation 
\[ \lp \partial_t+{\DD}_x \rp p_t(x,y)=0.\]
Furthermore, $ \lim_{t\rightarrow 0} P_t s=s,$ in $\Vert \cdot \Vert_0$, for any smooth section $s \in \Gamma(M, \EE)$, where 
\[(P_t s)(x)=\int_M p_t(x,y)s(y)dg(y),\]
where $dg(y)$ is the Riemannian density of the metric $g$. The next theorem is due to Minakshisundaram and Pleijel (see for instance \cite{pleijel} and \cite{gauduchon}).

\begin{theorem}\label{fi0}
The heat kernel $p_t$ has the following asymptotic expansion near the diagonal:
\begin{align*}
p_t(x,y) \stackrel{t \searrow 0}{\sim} (4 \pi t)^{-n/2} e^{-\frac{d(x,y)^2}{4t}} \sum_{i=0}^{\infty} t^i \Psi_i (x,y),
\end{align*}
where $\Psi_i: \EE_y \longrightarrow \EE_x $ are $\mathcal C^{\infty}$ sections defined near the diagonal. Moreover, the $\Psi_i$'s are given by the following explicit formul\ae:
\begin{align*}
{}&\Psi_0(x,y)=\jj^{-1/2}(\xx)\tau_{y}^{x},  \\
{}&\tau_{x}^y \Psi_i(x,y)=-\jj^{-1/2}(\xx) \int_{0}^{1}s^{i-1} \jj^{-1/2}(x_s)\tau_{x_s}^{y} \DD_x \Psi_{i-1}(x_s,y)ds.
\end{align*}
\end{theorem}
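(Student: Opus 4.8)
The plan is to follow the classical Minakshisundaram--Pleijel strategy (see \cite{pleijel}, \cite{gauduchon}, \cite{berline}), adapted to a general self-adjoint generalized Laplacian $\DD$ acting on sections of $\EE$: first construct, in a tubular neighbourhood of the diagonal, an approximate heat kernel $k_t^N=e_t\sum_{i=0}^N t^i\Psi_i$ with leading term the transplanted Gaussian $e_t(x,y):=(4\pi t)^{-n/2}e^{-d(x,y)^2/4t}$ which solves the heat equation $(\partial_t+\DD_x)k_t^N=0$ modulo an error vanishing to order $t^N$, and then compare $k_t^N$ with the genuine heat kernel $p_t$ via Duhamel's formula together with uniqueness of the heat semigroup.

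\emph{The transport equations.} Fix $y$, work in geodesic normal coordinates about $y$, and set $r=d(x,y)=|\xx|$. The key identity is that for any time-independent section $\sigma=\sigma(x,y)$ of $\EE_x\otimes\EE_y^*$ defined for $x$ near $y$,
\[ (\partial_t+\DD_x)(e_t\sigma)=\frac{e_t}{t}\,\mathcal P\sigma+e_t\,\DD_x\sigma, \qquad \mathcal P:=\nabla_{r\partial_r}+\tfrac r2\,\partial_r(\log\jj); \]
this follows by writing $\DD=\nabla^*\nabla+F$, using the Gauss lemma (so that $\mathrm{grad}_g e_t=-\tfrac r{2t}e_t\partial_r$) and the expression for $\Delta_g$ of a radial function, after which the $t^{-2}$ Gaussian terms cancel precisely because $\sigma_2(\DD)(x,\xi)=|\xi|^2\id_\EE$, leaving the first-order radial operator $\mathcal P$. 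Consequently
\[ (\partial_t+\DD_x)k_t^N=e_t\sum_{i=0}^N t^{i-1}\bigl((i+\mathcal P)\Psi_i+t\,\DD_x\Psi_i\bigr), \]
so imposing the transport equations $(i+\mathcal P)\Psi_i=-\DD_x\Psi_{i-1}$ for $i\ge 1$, together with $\mathcal P\Psi_0=0$ and $\Psi_0(y,y)=\id$, makes the sum telescope and leaves $(\partial_t+\DD_x)k_t^N=t^N e_t\,\DD_x\Psi_N$. Each transport equation is a linear first-order ODE along the radial geodesic $s\mapsto x_s=\exp_y(s\xx)$ with a regular singular point at $s=0$; solving $\mathcal P\Psi_0=0$ with $\Psi_0(y,y)=\id$ gives $\Psi_0=\jj^{-1/2}\tau_y^x$, and solving the inhomogeneous equations by the integrating factor $s^i\jj^{1/2}(x_s)$, subject to the requirement that $\Psi_i$ stay bounded as $s\to 0$, yields exactly the recursive formulae in the statement. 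Since those formulae are integrals over the fixed interval $[0,1]$ of integrands built smoothly out of $\exp$ and parallel transport, the $\Psi_i$ extend to $\mathcal C^\infty$ sections across the diagonal, with $\Psi_i(y,y)=-\tfrac 1i\DD_x\Psi_{i-1}(y,y)$.

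\emph{Comparison with the true heat kernel.} Pick a cutoff $\chi\in\mathcal C^\infty(M\times M)$ equal to $1$ near the diagonal and supported in $\{d(x,y)<\epsilon\}$. Since $\DD$ is self-adjoint on the closed manifold $M$, the heat Cauchy problem has unique solutions (spectral theorem), and both $e^{-t\DD}$ and the operator $K_t^N$ with kernel $\chi k_t^N$ tend to $\id$ as $t\searrow 0$, so Duhamel's formula gives
\[ p_t(x,y)-\chi(x,y)k_t^N(x,y)=-\int_0^t\!\!\int_M p_{t-s}(x,z)\,\bigl[(\partial_s+\DD_z)(\chi k_s^N)\bigr](z,y)\,dg(z)\,ds. \]
Here $(\partial_s+\DD_z)(\chi k_s^N)=\chi\,s^N e_s\,\DD_z\Psi_N+[\DD_z,\chi]k_s^N$; the commutator term is supported in $\{d\ge\epsilon/2\}$, where $e_s\le(4\pi s)^{-n/2}e^{-\epsilon^2/16s}=O(s^\infty)$, while the main term is $O(s^{N-n/2})$ in supremum norm because $\Psi_N$ is bounded near the diagonal and $e_s\le(4\pi s)^{-n/2}$. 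Using the a priori bound $\|e^{-u\DD}\|_{L^\infty\to L^\infty}\le C$ for $u$ in a bounded interval --- valid for a generalized Laplacian, e.g.\ via Kato's inequality for $\nabla^*\nabla$ together with a Duhamel/Gronwall argument absorbing $F$ --- the convolution is bounded by $C'\!\int_0^t s^{N-n/2}ds=O(t^{N+1-n/2})$, whence $\|p_t-\chi k_t^N\|_{C^0(M\times M)}=O(t^{N+1-n/2})$. Finally, $u_t:=p_t-\chi k_t^N$ solves $(\partial_t+\DD_x)u_t=-[(\partial_t+\DD_x)(\chi k_t^N)]$ with smooth right-hand side controlled as above, so interior parabolic estimates on the slabs $[t/2,t]\times(\text{small ball})$ upgrade the $C^0$ bound to $O(t^{N+1-n/2-k/2})$ in every $C^k$-norm near the diagonal. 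As $N$ is arbitrary, this is precisely the claimed asymptotic expansion of $p_t$ with the stated coefficients $\Psi_i$.

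\emph{Main obstacle.} The computational core is the transport identity above: pinning down the exact form of $\mathcal P$ and the cancellation of the singular Gaussian terms (which uses crucially that $\DD$ is a generalized Laplacian, so its subprincipal and zeroth-order parts enter only through the source $\DD_x\Psi_{i-1}$), and then checking that the integrating-factor solution genuinely solves the ODE with the correct --- in particular non-singular --- behaviour at $r=0$. The other delicate point is the remainder estimate: establishing the uniform bound on $e^{-u\DD}$, showing the cutoff commutator contributes an $O(t^\infty)$ error, handling the Duhamel convolution near $s=0$, and passing from a $C^0$ to a $C^k$ estimate. Both points are classical but require the standard care.
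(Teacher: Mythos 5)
Your proposal is correct and is precisely the classical Minakshisundaram--Pleijel parametrix construction (transport equations for the $\Psi_i$ solved by an integrating factor along radial geodesics, then Duhamel plus an $L^\infty$ bound on the semigroup to control the remainder); the paper itself gives no proof of this theorem but cites \cite{pleijel} and \cite{gauduchon}, which argue exactly as you do. One small caveat: your integrating factor $s^i\jj^{1/2}(x_s)$ yields $\jj^{+1/2}(x_s)$ \emph{inside} the integral, namely
\[
\tau_{x}^y \Psi_i(x,y)=-\jj^{-1/2}(\xx) \int_{0}^{1}s^{i-1} \jj^{1/2}(x_s)\tau_{x_s}^{y} \DD_x \Psi_{i-1}(x_s,y)\,ds,
\]
which is the classical recursion, whereas the statement as printed has $\jj^{-1/2}(x_s)$ there; this appears to be a typo in the statement, so you should not claim your formula matches it ``exactly,'' but your derivation is the correct one.
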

The asymptotic sum in theorem $\ref{fi0}$ can be understood using truncation and bounds of derivatives as in \cite{berline}. We prefer the interpretation given in \cite{melrose}, where the heat kernel $p_t$ is shown to belong to $\rho^{-n} \mathcal C^{\infty}(M_H^2)$ on the parabolic blow-up space $M_H^2$ and to vanish rapidly at the temporal boundary face $\{ \omega_0 =0 \}$ (see section \ref{unified}).
\begin{example*}\label{example}
Consider the $n$-dimensional product Riemannian manifold $M=\lp S^1\rp^n=\mathbb R^n/(2 \pi \mathbb Z)^n$ with the standard metric $g=d\theta_1^2 \otimes...\otimes d\theta_n^2$. Let $\DD_1$ be the Laplacian on $M$ given by the metric $g$. The eigenvalues of $\DD_1$ are $\{k_1^2+...+k_n^2: k_1,...,k_n \in \mathbb Z  \}$. Let $\varphi_l(\xi)=\frac{1}{\sqrt{2\pi}} e^{il\xi}$ be the standard orthonormal basis of eigenfunctions of each $\DD_{S^1}$. Then for $\theta=(\theta_1,...,\theta_n) \in M$, the heat kernel $p_t$ of $\DD_1$ is the following:
\[ p_t(\theta,\theta)= \sum_{(k_1,...,k_n) \in \mathbb Z^n} e^{-t(k_1^2+...+k_n^2)} \varphi_{k_1}(\theta_1) \overline{\varphi_{k_1}(\theta_1)}... \varphi_{k_n}(\theta_n) \overline{\varphi_{k_n}(\theta_n)} .    \]
Since $\varphi_l(\xi)\overline{\varphi_l(\xi)}=\frac{1}{2\pi}$, for any $\xi \in S^1$, we get:
\[ p_t(\theta,\theta)= \tfrac{1}{(2\pi)^n} \sum_{(k_1,...,k_n) \in \mathbb Z^n} e^{-t(k_1^2+...+k_n^2)}  .    \]
Remark that the Fourier transform of the function $f_t: \mathbb R^n \longrightarrow \mathbb R$, $f_t(x)=e^{-t \vert x \vert^2}$ is given by
\[  \hat{f_t}(\xi)=\tfrac{\pi^{n/2}}{t^{n/2}} e^{-\frac{\vert \xi \vert^2}{4t}}.  \]
Using the multidimensional Poisson formula (see for instance \cite{bal}), we obtain that:
\begin{align*}
p_t(\theta,\theta)=\tfrac{1}{(2 \pi)^n} \sum_{k \in \mathbb Z^n} f_t(k)=\sum_{k \in \mathbb Z^n} \hat{f_t}(2 \pi k)=\tfrac{\pi^{n/2}}{(2 \pi )^n} t^{-n/2} + \tfrac{\pi^{n/2}}{(2 \pi )^n} t^{-n/2} \sum_{k \in \mathbb Z^n \setminus \{ 0 \}} e^{-\frac{\pi^2 \vert k \vert^2}{t}}.
\end{align*}
Since the last sum is of order $\mathcal O \lp e^{-\frac{1}{t}} \rp$ as $t \rightarrow 0$, it follows that the first coefficient in the asymptotic expansion at small-time $t$ of $p_t$ is $\tfrac{\pi^{n/2}}{(2 \pi )^n}$ and all the others vanish.
\end{example*}

From now on, suppose that $\DD$ is non-negative (i.e., $h_{\EE} \lp \DD f, f \rp \geq 0$, for any $f \in \mathcal C^{\infty} (M,\EE)$). For $s \in \mathbb C$, we define the complex powers $\DD^{-s} \in \Psi^{-2s} \lp M, \EE \rp$ of $\DD$ as:
\[
\DD^{-s} \Phi= \left\{
    \begin{array}{ll}
        \lambda^{-s}\Phi & \mbox{ if } \Phi \in E_{\lambda}, \ \lambda \neq 0, \\
        0 & \mbox{ if } \Phi \in \Ker \DD.
    \end{array}
\right.
\]
Remark that $(\DD^s)_{s \in \mathbb C}$ is a holomorphic family of pseudodifferential operators. Let $r \in (0,1)$. We denote by $h_t$ the heat kernel of $\DD^r$, namely the Schwartz kernel of the operator $e^{-t\DD^r}$. We have seen that: 
\begin{equation}\label{asp_t} 
p_t(x,x) \stackrel{t \searrow 0}{\sim} t^{-n/2} \sum_{j=0}^{\infty} t^{j}a_{j}(x,x), 
\end{equation} 
with smooth sections $a_j(x,x) \in \EE_x \otimes \EE_x^*$. 

\section{The link between the heat kernel and complex powers of the Laplacian}
\begin{proposition}[Mellin Formula]\label{mellin}
With the notations above, for $\Re s>0$, we have:   
\[ \DD^{-s}=\frac{1}{\Gamma(s)} \int_{0}^{\infty} t^{s-1} \lp e^{-t\DD}  - \PP_{\Ker \DD} \rp  dt , \]
where $\PP_{\Ker \DD}$ is the orthogonal projection onto the kernel of $\DD$.
\end{proposition}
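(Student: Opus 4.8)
The plan is to exploit the spectral decomposition of $\DD$ to reduce the operator identity to a scalar one, eigenspace by eigenspace, and then invoke the classical Mellin integral representation of $\lambda^{-s}$. First I would fix $\Re s > 0$ and test the claimed identity on an eigensection $\Phi \in E_\lambda$. If $\lambda = 0$, i.e. $\Phi \in \Ker \DD$, then $e^{-t\DD}\Phi = \Phi$ and $\PP_{\Ker\DD}\Phi = \Phi$, so the integrand $\lp e^{-t\DD} - \PP_{\Ker\DD}\rp\Phi$ vanishes identically in $t$; on the other hand $\DD^{-s}\Phi = 0$ by definition, so both sides agree trivially. If $\lambda > 0$, then $\PP_{\Ker\DD}\Phi = 0$ and $e^{-t\DD}\Phi = e^{-t\lambda}\Phi$, so the right-hand side applied to $\Phi$ equals $\frac{1}{\Gamma(s)}\lp\int_0^\infty t^{s-1} e^{-t\lambda}\,dt\rp\Phi$. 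The substitution $u = t\lambda$ turns this integral into $\lambda^{-s}\int_0^\infty u^{s-1}e^{-u}\,du = \lambda^{-s}\Gamma(s)$, so the right-hand side applied to $\Phi$ is exactly $\lambda^{-s}\Phi = \DD^{-s}\Phi$. Thus the two operators agree on every eigensection.

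Next I would address convergence and the passage from ``agrees on each eigensection'' to ``equal as operators.'' For the convergence of the $t$-integral defining the right-hand side: near $t = 0$ one needs $\Re s > 0$ together with a uniform bound on $\|e^{-t\DD} - \PP_{\Ker\DD}\|$, which is bounded by $1$ (indeed it tends to the identity on the orthogonal complement of $\Ker\DD$ and the operator norm stays $\le 1$ since $\DD \ge 0$); near $t = \infty$ one uses that $e^{-t\DD} - \PP_{\Ker\DD}$ decays like $e^{-t\lambda_1}$ where $\lambda_1 > 0$ is the smallest nonzero eigenvalue, which exists because $\Spec\DD$ is discrete and $M$ is compact. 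Hence the integral converges in operator norm (or strongly on $L^2$) and defines a bounded operator. Since $\{\Phi_j\}$ is a Hilbert basis of eigensections and both $\DD^{-s}$ and the right-hand side are bounded operators agreeing on this total set, they coincide.

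The one point requiring a little care — and the place I expect to be the main obstacle, though it is mild — is justifying the interchange of the integral $\int_0^\infty t^{s-1}(\cdot)\,dt$ with the spectral sum $\sum_j e^{-t\lambda_j}\langle\cdot,\Phi_j\rangle\Phi_j$, i.e. checking that the vector-valued integral really is computed eigenspace-by-eigenspace. This follows by a dominated-convergence argument: for a fixed $\psi \in L^2(M,\EE)$, write $\psi = \sum_j c_j \Phi_j$, split the sum into the finite-dimensional kernel part (which contributes nothing) and the rest, and use $\sum_{\lambda_j > 0} |c_j|^2 < \infty$ together with the bound $\int_0^{\infty}|t^{s-1}| e^{-t\lambda_j}\,dt = \lambda_j^{-\Re s}\Gamma(\Re s) \le \lambda_1^{-\Re s}\Gamma(\Re s)$ (for $\Re s$ in a fixed compact subset of the right half-plane) to apply Fubini/Tonelli and exchange sum and integral. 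One then recovers $\DD^{-s}\psi = \sum_{\lambda_j>0} c_j \lambda_j^{-s}\Phi_j$, which is precisely the definition of $\DD^{-s}$. Finally, I would note for completeness that the resulting operator lies in $\Psi^{-2s}(M,\EE)$ — but since that statement is already recorded in the excerpt preceding the proposition, I would simply cite it rather than reprove it here.
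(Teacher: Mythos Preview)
Your proof is correct and follows exactly the same approach as the paper: verify the identity on each eigensection and conclude by density since $\{\Phi_j\}$ is a Hilbert basis. The paper's proof is in fact only two sentences and leaves the convergence and Fubini-type justifications you spell out as implicit, so your write-up is simply a more detailed version of the same argument.
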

\begin{proof}
It is straight-forward to check that both sides coincide on eigensections $\Phi \in E_{\lambda}$, $\lambda \in \Spec \DD$. Since $\lbrace \Phi_j \rbrace_{j}$ is a Hilbert basis, the result follows.   
\end{proof}

We will write $\PP_{\Ker \DD}(x,y)$ for the Schwartz kernel $\sum_{k} \varphi_k(x) \otimes \varphi_k^*(y)$, where $\{ \varphi_k \}$ is an orthonormal basis in $\Ker \DD$. Denote by $q_{-s}$ the Schwartz kernel of the operator $\DD^{-s}$. Let us first study the poles and the zeros of $q_{-s}$ away from the diagonal. 
\begin{proposition}\label{poliafaradiag}
Let $K$ be a compact in $M \times M \setminus \Diag$. Then for $(x,y)\in K $, the function $s \longmapsto q_{-s} \in \mathcal C^{\infty} \lp K, \EE \boxtimes \EE^*  \rp  $ is entire. Moreover, $q_{-s}$ vanishes at each negative integer $s$. 
\end{proposition}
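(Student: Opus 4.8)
The plan is to use the Mellin formula (Proposition \ref{mellin}) to express $q_{-s}$ away from the diagonal as an integral over $(0,\infty)$ of $p_t(x,y) - \PP_{\Ker \DD}(x,y)$ against $t^{s-1}/\Gamma(s)$, and then to analyze the two ends of the integration interval separately. First I would split $\int_0^\infty = \int_0^1 + \int_1^\infty$. For the tail $\int_1^\infty$, the spectral sum $p_t(x,y) = \sum_j e^{-t\lambda_j}\Phi_j(x)\otimes\Phi_j^*(y)$ shows that $p_t(x,y) - \PP_{\Ker\DD}(x,y)$ decays exponentially in $t$ uniformly on the compact $K$ (with all $\mathcal C^\infty$ derivatives in $(x,y)$), since the nonzero eigenvalues are bounded below by $\lambda_1>0$; hence $\int_1^\infty t^{s-1}(p_t - \PP_{\Ker\DD})\,dt$ converges for \emph{all} $s \in \mathbb C$ and defines an entire $\mathcal C^\infty(K, \EE\boxtimes\EE^*)$-valued function. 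For the near-zero piece $\int_0^1$, the key input is that on a compact set $K$ disjoint from the diagonal, the heat kernel $p_t(x,y)$ vanishes to infinite order as $t\searrow 0$: this follows from the Minakshisundaram–Pleijel expansion of Theorem \ref{fi0} together with the off-diagonal Gaussian factor $e^{-d(x,y)^2/4t}$ (or equivalently from Melrose's statement that $p_t$ vanishes rapidly at the temporal face $\{\omega_0=0\}$), and $d(x,y)$ is bounded below by a positive constant on $K$. Likewise $\PP_{\Ker\DD}(x,y)$ is $t$-independent and smooth, so $\int_0^1 t^{s-1}\PP_{\Ker\DD}\,dt = \PP_{\Ker\DD}/s$ is meromorphic with a single simple pole at $s=0$ — but this is cancelled by the $1/\Gamma(s)$ prefactor, which vanishes at $s=0$. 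Because $p_t(x,y) = \mathcal O(t^N)$ for every $N$ uniformly on $K$, the integral $\int_0^1 t^{s-1}p_t\,dt$ converges for all $s$ and is entire; dividing by $\Gamma(s)$, which is entire and nonvanishing except for its poles at the non-positive integers where $1/\Gamma$ has simple zeros, keeps everything entire.

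Assembling the pieces: $q_{-s} = \tfrac{1}{\Gamma(s)}\bigl(\int_0^1 t^{s-1}p_t\,dt - \tfrac{1}{s}\PP_{\Ker\DD} + \int_1^\infty t^{s-1}(p_t-\PP_{\Ker\DD})\,dt\bigr)$, and each term is a $\mathcal C^\infty(K,\EE\boxtimes\EE^*)$-valued entire function of $s$ after multiplication by $1/\Gamma(s)$, since the only potential singularity — the $1/s$ from the kernel projector — is killed by the simple zero of $1/\Gamma$ at $s=0$. This proves that $s\mapsto q_{-s}$ is entire on $K$. For the vanishing at negative integers: at $s=-k$ with $k \in \mathbb N^*$, the factor $1/\Gamma(s)$ vanishes to first order, while all three bracketed terms are finite there (the $\int_0^1$ and $\int_1^\infty$ integrals converge at $s=-k$ thanks to the rapid vanishing of $p_t$ at $t=0$ and its exponential decay at $t=\infty$, and $-\tfrac1s\PP_{\Ker\DD}$ is obviously finite at $s=-k$). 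Hence $q_{-k}(x,y)=0$ for $(x,y)\in K$.

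The main obstacle, and the point that needs to be stated carefully, is the claim that $p_t(x,y)$ vanishes to infinite order as $t\searrow 0$ uniformly (together with all spatial derivatives) on the compact set $K\subset M\times M\setminus\Diag$ — i.e. that the formal Gaussian tail in \eqref{intro}/Theorem \ref{fi0} genuinely controls the remainder off the diagonal. I would justify this by invoking the rigorous remainder estimates for the Minakshisundaram–Pleijel parametrix (as in \cite{berline}), or more cleanly by citing Melrose's result \cite{melrose} that $p_t \in \rho^{-n}\mathcal C^\infty(M_H^2)$ vanishes rapidly at the temporal face, from which the off-diagonal estimate $\sup_K |\partial_x^\alpha \partial_y^\beta p_t(x,y)| = \mathcal O(t^N)$ for all $N,\alpha,\beta$ follows immediately since $K$ lifts to a compact subset of $M_H^2$ meeting only the temporal face. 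Everything else — the exponential tail bound, the interchange of integration and differentiation in $(x,y)$, and the bookkeeping with $1/\Gamma(s)$ — is routine.
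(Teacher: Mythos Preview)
Your proposal is correct and follows essentially the same route as the paper: Mellin formula, split $\int_0^\infty = \int_0^1 + \int_1^\infty$, exponential decay at infinity, rapid vanishing of $p_t$ on $K$ as $t\searrow 0$, and the observation that the sole would-be pole at $s=0$ (from the $\PP_{\Ker\DD}/s$ term) as well as the zeros of $1/\Gamma$ at negative integers produce the entire extension and the stated vanishing. The paper phrases the conclusion via the identity $\Gamma(s)q_{-s}=f_{x,y}(s)$ rather than dividing by $\Gamma(s)$, but this is only a cosmetic difference.
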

\begin{proof}
For $\Re s>0$, let $f_{x,y}(s)= \int_{0}^{\infty} t^{s-1}\lp p_t(x,y)-\PP_{\Ker \DD}(x,y) \rp dt $.
Remark that:
\begin{align*}
 f_{x,y}(s)=\int_{0}^{\infty} t^{s-1}\lp p_t(x,y)-\PP_{\Ker \DD}(x,y) \rp dt ={}& \int_{1}^{\infty} t^{s-1} \lp p_t(x,y)-\PP_{\Ker \DD}(x,y) \rp dt \\ +{}& \int_{0}^{1} t^{s-1}p_t(x,y) dt - \PP_{\Ker \DD}(x,y) \cdot \int_{0}^{1} t^{s-1}dt. 
\end{align*}
Since $p_t(x,y)-\PP_{\Ker \DD}(x,y)$ decays exponentially fast as $t$ goes to $\infty$, the first integral is absolutely convergent in $ C^k$ norms. The heat kernel $p_t$ vanishes with all of its derivatives as $t \searrow 0$ in the compact $K$, thus the second integral is also absolutely convergent. The last integral term is well-defined for $\Re s>0$, and it extends to a meromorphic function on $\mathbb C$ with a simple pole in $s=0$. Therefore $s \mapsto f_{x,y}(s)$ extends to a meromorphic function on $\mathbb C$. By proposition \ref{mellin} and the identity theorem, the equality of meromorphic functions:
\[ \Gamma(s)q_{-s}(x,y)= f_{x,y}(s)  \]
holds for any $s \in \mathbb C$. In particular, we obtain $q_0(x,y)=- \PP_{\Ker \DD}(x,y)$. Furthermore, ${q_{-s}}_{\vert_{K}}$ is an entire function and vanishes in $s=-1,-2,...$.   
\end{proof}

Now we check the behavior of $q_{-s}$ along the diagonal. It is no longer holomorphic there, and the coefficients $a_j(x,x)$ from \eqref{asp_t} appear as residues of $q_{-s}(x,x)$.  
\begin{proposition}\label{poligammaq}
Let $x \in M$. Then $\Gamma(s)q_{-s}(x,x)$ is a meromorphic function on $\mathbb C$ with simple poles in $s \in \lbrace 0  \rbrace \cup \lbrace \frac{n}{2}-j : j \in \mathbb N \rbrace$. The residue of $\Gamma(s)q_{-s}(x,x)$ in $s=\frac{n}{2}-j$, $j \neq \frac{n}{2}$, is $a_j(x,x)$. If $n$ is even, then the residue of $\Gamma(s)q_{-s}(x,x)$ in $s=0$ is $a_{\frac{n}{2}}(x,x)-\PP_{\Ker \DD}(x,x)$. If $n$ is odd, the residue in $s=0$ is $-\PP_{\Ker \DD}(x,x)$ and $q_{-s}(x,x)$ vanishes at $s \in \{ -1,-2,... \}$.  
\end{proposition}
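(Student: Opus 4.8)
The plan is to run the same Mellin-transform argument used in Proposition \ref{poliafaradiag}, but now on the diagonal, where the small-time behaviour of $p_t(x,x)$ is governed by \eqref{asp_t} rather than by rapid vanishing. First I would split
\[
f_{x,x}(s) = \int_0^1 t^{s-1}\bigl(p_t(x,x) - \PP_{\Ker\DD}(x,x)\bigr)\,dt + \int_1^\infty t^{s-1}\bigl(p_t(x,x) - \PP_{\Ker\DD}(x,x)\bigr)\,dt,
\]
where the integral over $[1,\infty)$ is entire in $s$ because $p_t(x,x)-\PP_{\Ker\DD}(x,x)$ decays exponentially. For the integral over $[0,1]$, I would insert the asymptotic expansion \eqref{asp_t}: fix a large integer $N$, write $p_t(x,x) = t^{-n/2}\sum_{j=0}^{N} t^j a_j(x,x) + R_N(t,x)$ with $R_N(t,x) = O(t^{N+1-n/2})$ together with all its $t$-derivatives, and integrate term by term. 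Each model term contributes
\[
a_j(x,x)\int_0^1 t^{s-1+j-n/2}\,dt = \frac{a_j(x,x)}{s + j - n/2},
\]
a meromorphic function with a simple pole at $s = \tfrac n2 - j$ and residue $a_j(x,x)$; the $\PP_{\Ker\DD}$ term contributes $-\PP_{\Ker\DD}(x,x)/s$, with a simple pole at $s=0$; and the remainder $\int_0^1 t^{s-1}R_N(t,x)\,dt$ is holomorphic for $\Re s > n/2 - N - 1$. Letting $N\to\infty$ shows that $f_{x,x}(s)$, hence $\Gamma(s)q_{-s}(x,x) = f_{x,x}(s)$ by Proposition \ref{mellin} and the identity theorem, is meromorphic on $\mathbb C$ with at most simple poles at the points of $\{0\}\cup\{\tfrac n2 - j : j\in\mathbb N\}$, with residue $a_j(x,x)$ at $s=\tfrac n2 - j$ whenever $j\neq n/2$.

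The only subtlety is the point $s=0$, which may be hit by two sources of a pole: the $-\PP_{\Ker\DD}(x,x)/s$ term always, and the model term with $j = n/2$ when $n$ is even. If $n$ is even, these combine into a single simple pole at $s=0$ with residue $a_{n/2}(x,x) - \PP_{\Ker\DD}(x,x)$, exactly as asserted. If $n$ is odd, no model term has index $j = n/2$ (since $j\in\mathbb N$), so the only contribution at $s=0$ is $-\PP_{\Ker\DD}(x,x)$; moreover $1/\Gamma(s)$ vanishes at $s=-1,-2,\dots$, and since $f_{x,x}(s)$ has no poles there (the poles $\tfrac n2 - j$ are half-integers, never negative integers, when $n$ is odd), it follows that $q_{-s}(x,x) = f_{x,x}(s)/\Gamma(s)$ vanishes at every negative integer. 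This last point is the one requiring a little care: one must check that none of the poles $\tfrac n2 - j$ accidentally coincides with a negative integer, which fails precisely because $n$ is odd.

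I expect the main obstacle to be purely bookkeeping rather than conceptual: justifying that the asymptotic expansion \eqref{asp_t} may be integrated term by term against $t^{s-1}$ on $[0,1]$, i.e. controlling the remainder $R_N$ uniformly enough (in particular that the expansion \eqref{asp_t} is a genuine asymptotic expansion with remainder estimates, which follows from Theorem \ref{fi0} via truncation, or from the Melrose picture $p_t\in\rho^{-n}\mathcal C^\infty(M_H^2)$ cited after it). Once that is in hand, everything else is the standard Mellin dictionary between asymptotic terms $t^{\mu}$ and simple poles at $s=-\mu$, applied to the shifted exponents $j - n/2$, together with the elementary observation about the zeros of $1/\Gamma$.
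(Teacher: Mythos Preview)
Your proposal is correct and follows essentially the same approach as the paper: split the Mellin integral at $t=1$, use exponential decay for the tail, plug the asymptotic expansion \eqref{asp_t} into the $[0,1]$ piece, and read off the simple poles and residues. You even spell out the odd-$n$ vanishing at negative integers (via the zeros of $1/\Gamma$ and the half-integrality of the poles $\tfrac n2 - j$) more explicitly than the paper does.
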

\begin{proof}
Consider the function $f_{x,x}(s)=\int_{0}^{\infty} t^{s-1}\lp p_t(x,x)- \PP_{\Ker \DD}(x,x) \rp dt$ for $\Re s > \frac{n}{2}$. We have:
\begin{align*}
f_{x,x}(s)=\int_{0}^{\infty} t^{s-1}\lp p_t(x,x)-\PP_{\Ker \DD}(x,x) \rp dt ={}& \int_{1}^{\infty} t^{s-1} \lp p_t(x,x)-\PP_{\Ker \DD}(x,x) \rp dt \\ +{}& \int_{0}^{1} t^{s-1}p_t(x,x) dt - \PP_{\Ker \DD}(x,x) \cdot \int_{0}^{1} t^{s-1}dt. 
\end{align*}
The first integral is absolutely convergent, as seen in the proof of proposition \ref{poliafaradiag}. The last integral term is meromorphic with a simple pole at $s=0$ with residue $-\PP_{\Ker \DD}(x,x)$. Let us analyze the behavior of the second term $A_x(s)=\int_{0}^{1} t^{s-1}p_t(x,x)dt$. 

Using \eqref{asp_t}, we have that for $N \geq 0$,
\[t^{n/2}p_t(x,x)=\sum_{j=0}^N t^j a_j(x,x)+R_{N+1}(t,x), \]
where $R_{N+1}$ is of order $\mathcal O (t^{N+1})$ as $t \to 0$. Furthermore, we obtain:
\begin{align*}
A_x(s)=\int_{0}^{1} t^{s-\frac{n}{2}-1} t^{\frac{n}{2}}p_t(x,x)dt={}&\sum_{j=0}^N \int_{0}^{1} t^{s-\frac{n}{2}-1} t^j a_j(x,x)dt + \int_{0}^{1} t^{s-\frac{n}{2}-1} R_{N+1}(t,x)dt \\
={}&\sum_{j=0}^N a_j(x,x) \frac{1}{s-\frac{n}{2}+j} + \int_{0}^{1} t^{s-\frac{n}{2}-1} R_{N+1}(t,x)dt.
\end{align*}
Thus $s \mapsto A_x(s)$ extends to a meromorphic function on $\mathbb C$ with simple poles in $\{ \frac{n}{2}-j : \ j=\overline{0,N+1} \}$.  Using again proposition \ref{mellin} and the identity theorem,  we deduce the equality:
\[ \Gamma(s)q_{-s}(x,x)=f_{x,x}(s),\]
for any $s \in \mathbb C$. It follows that $\Gamma(s) q_{-s}(x,x)$ is meromorphic on $\mathbb C$ with simple poles in $s \in \lbrace 0  \rbrace \cup \lbrace \frac{n}{2}-j : j \in \mathbb N \rbrace$. Moreover, the residue of $\Gamma(s)q_{-s}(x,x)$ in a pole $\frac{n}{2}-j$ is $a_j(x,x)$, and the conclusion follows. 
\end{proof}

For $p\in \mathbb C$ and $\epsilon>0$, let $B_\epsilon(p)$ be the open disk centered in $p$ of radius $\epsilon$. We need the following technical result.
\begin{proposition}\label{umbenzi}
Consider $\alpha < \beta$, and let $\epsilon>0$, $l \in \mathbb N$. 
\begin{itemize}
\item[$\bullet$] If $K$ is a compact set disjoint from the diagonal, then the function $ s \longmapsto \Gamma(s){q_{-s}}_{\vert_K} $ is uniformly bounded in $\{ s \in \mathbb C : \alpha \leq \Re s \leq \beta  \} \setminus B_{\epsilon}(0)$ in the $\mathcal C^{l}$ norm on $K$.
\item[$\bullet$] The function $ s \longmapsto  \Gamma(s){q_{-s}}_{\vert_{\Diag}} $ defined on $\{ s \in \mathbb C: \ \alpha \leq \Re s \leq \beta \}\setminus \bigcup_{j \in \mathbb N \cup\lbrace \frac{n}{2} \rbrace } B_{\epsilon}(\frac{n}{2}-j) \longrightarrow \mathcal C^l \lp \Diag, \EE \otimes \EE^* \rp$ is uniformly bounded.
\end{itemize}
\end{proposition}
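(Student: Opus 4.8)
The plan is to estimate $\Gamma(s)q_{-s}$ \emph{directly} through the Mellin representation $\Gamma(s)q_{-s}(x,y)=f_{x,y}(s)$ already established in Propositions \ref{poliafaradiag} and \ref{poligammaq} (so that $\Gamma(s)$'s own growth never enters), and in both bullets to split the defining integral at $t=1$:
\[
f_{x,y}(s)=\int_1^\infty t^{s-1}\lp p_t(x,y)-\PP_{\Ker\DD}(x,y)\rp dt+\int_0^1 t^{s-1}p_t(x,y)\,dt-\frac{\PP_{\Ker\DD}(x,y)}{s},
\]
an identity of meromorphic functions, valid as written for $\Re s$ large and extended by the identity theorem. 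The tail $\int_1^\infty$ is entire, and since $p_t-\PP_{\Ker\DD}$ decays exponentially in every $\mathcal C^l$ norm as $t\to\infty$ (spectral gap above $0$), it is bounded on $\{\Re s\leq\beta\}$ by $\int_1^\infty t^{\beta-1}\|p_t-\PP_{\Ker\DD}\|_{\mathcal C^l}\,dt<\infty$; the term $\PP_{\Ker\DD}/s$ is bounded by $\epsilon^{-1}\|\PP_{\Ker\DD}\|_{\mathcal C^l}$ once $B_\epsilon(0)$ is excised. So in both cases everything reduces to controlling $\int_0^1 t^{s-1}p_t\,dt$ uniformly.

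For the off-diagonal bullet, $p_t(x,y)$ and all its $(x,y)$-derivatives vanish to infinite order as $t\searrow0$, uniformly on the compact $K\subset(M\times M)\setminus\Diag$ (from Theorem \ref{fi0}, $|p_t(x,y)|\lesssim e^{-d(x,y)^2/4t}$ with $d$ bounded below on $K$). Hence for every $M_0\in\mathbb N$ there is $C_{M_0}$ with $\|p_t\|_{\mathcal C^l(K)}\leq C_{M_0}t^{M_0}$ for $t\leq1$; choosing $M_0>-\alpha$ and using $t^{\Re s-1}\leq t^{\alpha-1}$ for $0<t<1$, $\Re s\geq\alpha$, gives $\bigl\|\int_0^1 t^{s-1}p_t\,dt\bigr\|_{\mathcal C^l(K)}\leq C_{M_0}\int_0^1 t^{\alpha-1+M_0}\,dt<\infty$, uniformly in $s$. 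Only the excision of $B_\epsilon(0)$ is needed here, consistently with $q_{-s}|_K$ being entire.

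Along the diagonal the new feature is that $\int_0^1 t^{s-1}p_t(x,x)\,dt$ has genuine poles at $s=\tfrac n2-j$. To expose them, fix $N$ with $N>\tfrac n2-\alpha-1$ and use the $\mathcal C^l$-form of the heat expansion \eqref{asp_t}, $t^{n/2}p_t(x,x)=\sum_{j=0}^N t^j a_j(x,x)+R_{N+1}(t,x)$ with $\|R_{N+1}(t,\cdot)\|_{\mathcal C^l(\Diag)}\leq C t^{N+1}$ for $t\leq1$, to get
\[
\int_0^1 t^{s-1}p_t(x,x)\,dt=\sum_{j=0}^N\frac{a_j(x,x)}{s-\tfrac n2+j}+\int_0^1 t^{s-\tfrac n2-1}R_{N+1}(t,x)\,dt .
\]
On $\{\alpha\leq\Re s\leq\beta\}$ with all $B_\epsilon(\tfrac n2-j)$, $j\in\mathbb N\cup\{\tfrac n2\}$, removed, each denominator satisfies $|s-\tfrac n2+j|\geq\epsilon$, so the finite sum is bounded by $\epsilon^{-1}\sum_{j=0}^N\|a_j\|_{\mathcal C^l(\Diag)}$; by the choice of $N$ the remainder integrand is $\leq C t^{\alpha-\tfrac n2+N}$ with exponent $>-1$, so that integral is bounded as well. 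Adding the three pieces gives the uniform $\mathcal C^l$ bound. The main work is bookkeeping rather than any conceptual obstacle: choosing $N$ (depending only on $n$ and $\alpha$) large enough that the remainder converges on the whole strip and that the displayed decomposition genuinely is the meromorphic continuation on $\Re s>\tfrac n2-N-1\supset\{\alpha\leq\Re s\leq\beta\}$, and justifying that differentiation in $x$ (resp. $(x,y)$) commutes with the $t$-integral and with the asymptotic expansion — all routine given Theorem \ref{fi0} and Melrose's description of $p_t$ as a conormal function on $M_H^2$.
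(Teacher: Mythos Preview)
Your proof is correct and follows exactly the same approach as the paper: identify $\Gamma(s)q_{-s}(x,y)=f_{x,y}(s)$ via the Mellin representation, split at $t=1$ into the three pieces $\int_1^\infty$, $\int_0^1$, and $-\PP_{\Ker\DD}/s$, and bound each piece uniformly on the strip minus the relevant $\epsilon$-balls just as in the proofs of Propositions~\ref{poliafaradiag} and~\ref{poligammaq}. The paper's own proof consists of two sentences referring back to those propositions; you have simply spelled out the estimates (choice of $N$, exponential decay at infinity, infinite-order vanishing off the diagonal, excision of the pole denominators) that are implicit there.
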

\begin{proof}
With the same argument as in the proof of proposition \ref{poliafaradiag}, the restriction of the $\mathcal C^l$ norm on $K$ of the function $s \mapsto f_{x,y}(s) $ is absolutely convergent in $\{ s \in \mathbb C : \alpha \leq \Re s \leq \beta  \} \setminus B_{\epsilon}(0)$, hence it is uniformly bounded. 

As in the proof of proposition \ref{poligammaq},  the $\mathcal C^l$ norm along $\Diag$ of $s \longmapsto f_{x,x}(s)$ converges absolutely in $\{ s \in \mathbb C: \ \alpha \leq \Re s \leq \beta \}\setminus \bigcup_{j \in \mathbb N \cup \lbrace \frac{n}{2} \rbrace } B_{\epsilon}(\frac{n}{2}-j)$, thus the conclusion follows.  
\end{proof}

\section{The behavior of quotients of Gamma functions along vertical lines}
A fundamental result used in \cite{art} is the Legendre duplication formula:
\[ \frac{\Gamma(s)}{\Gamma \lp \frac{s}{2} \rp } = \tfrac{1}{\sqrt{2\pi}} 2^{s- \frac{1}{2}} \Gamma\lp \frac{s+1}{2} \rp, \]
together with the rapid decay of the Gamma function in vertical lines $\Re s = \tau$ (see e.g. \cite{mellin}). These results are replaced in our case by the following estimate.
\begin{proposition}\label{rapgamma}
The function $s \longmapsto \frac{\Gamma(s)}{\Gamma(rs)}$ decreases in vertical lines faster than $\vert s \vert^{-k}$, for any $k \geq 0$, uniformly in each strip $\lbrace s \in \mathbb C : \alpha \leq \Re (s) \leq \beta \rbrace$, for any $\alpha,\beta \in \mathbb R$. 
\end{proposition}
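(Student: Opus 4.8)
The plan is to deduce the estimate directly from the classical Stirling asymptotics of $\Gamma$ along vertical lines. Writing $s = \sigma + it$ with $\sigma \in [\alpha,\beta]$, the starting point is the estimate
\[
|\Gamma(\sigma + it)| = \sqrt{2\pi}\, |t|^{\,\sigma - \frac12}\, e^{-\pi|t|/2}\lp 1 + O\lp|t|^{-1}\rp \rp \qquad \text{as } |t|\to\infty,
\]
valid uniformly for $\sigma$ in any fixed bounded interval; it follows by taking real parts in the complex Stirling expansion $\log\Gamma(s) = (s - \tfrac12)\log s - s + \tfrac12\log 2\pi + O(|s|^{-1})$, which holds uniformly in any sector $|\arg s| \le \pi - \delta$ (see e.g. \cite{mellin}), and then expanding $\arg(\sigma + it)$ and $\log|\sigma+it|$ for $|t| \to \infty$; the case $t < 0$ follows from $|\Gamma(\bar s)| = |\Gamma(s)|$.

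First I would apply this to the numerator $\Gamma(s)$ and, since $r\sigma$ again ranges over a bounded interval and $\Gamma$ never vanishes, to the denominator $\Gamma(rs) = \Gamma(r\sigma + irt)$, obtaining $|\Gamma(rs)| = \sqrt{2\pi}\,(r|t|)^{\,r\sigma - 1/2}e^{-\pi r|t|/2}\lp 1 + O\lp|t|^{-1}\rp\rp$. Dividing the two expansions, the constants $\sqrt{2\pi}$ cancel and one is left with
\[
\left| \frac{\Gamma(s)}{\Gamma(rs)} \right| = r^{\frac12 - r\sigma}\, |t|^{(1-r)\sigma}\, e^{-\pi(1-r)|t|/2}\lp 1 + O\lp|t|^{-1}\rp\rp,
\]
uniformly for $\sigma \in [\alpha,\beta]$ as $|t| \to \infty$.

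Now the conclusion is immediate and elementary: because $r \in (0,1)$ we have $1 - r > 0$, so the factor $e^{-\pi(1-r)|t|/2}$ decays faster than any power $|t|^{-k}$; meanwhile $r^{\frac12 - r\sigma}$ is bounded on $[\alpha,\beta]$, and $|t|^{(1-r)\sigma} \le |t|^{(1-r)\max(|\alpha|,|\beta|)}$ once $|t| \ge 1$. Since $|s| \asymp |t|$ along a vertical line, it follows that for every $k \ge 0$ there is a constant $C = C(\alpha,\beta,r,k)$ with
\[
|s|^{k}\left| \frac{\Gamma(s)}{\Gamma(rs)} \right| \le C\, |t|^{\,k + (1-r)\max(|\alpha|,|\beta|)}\, e^{-\pi(1-r)|t|/2} \longrightarrow 0 \quad \text{as } |t| \to \infty,
\]
uniformly in the strip, which is exactly the assertion.

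I do not expect a genuine obstacle here; the only points to keep track of are the uniformity in $\sigma$, which is already built into the uniform form of Stirling's formula, and the observation that the claim concerns only the behaviour as $|t| \to \infty$. In particular, on the exceptional vertical lines where $\sigma$ is a non-positive integer (respectively $\sigma \in \{-j/r : j \ge 1\}$) the quotient $\Gamma(s)/\Gamma(rs)$ has a pole (respectively a zero) at $t = 0$, but this single point plays no role in the statement, and everything one uses downstream concerns integration of $\Gamma(s)/\Gamma(rs)$ against $q_{-s}$ over vertical lines far out towards $|t| = \infty$.
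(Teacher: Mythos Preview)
Your proof is correct and rests on the same underlying tool as the paper's (Stirling's formula), but you package it more efficiently. The paper writes out the complex Stirling expansion for $\Gamma(s)$ and $\Gamma(rs)$, takes the quotient to get $\frac{\Gamma(s)}{\Gamma(rs)} = s^{s(1-r)} e^{s(r-1)} r^{1/2-rs} e^{\Omega(s)-\Omega(rs)}$, and then proves a separate lemma showing that $|s|^k|s^s| \to 0$ along vertical lines by expanding $\Re(s\log s)$ by hand. You instead invoke the standard vertical-line form $|\Gamma(\sigma+it)| \sim \sqrt{2\pi}\,|t|^{\sigma-1/2}e^{-\pi|t|/2}$ directly; this already encodes the computation the paper carries out in its auxiliary lemma, and makes the exponential decay factor $e^{-\pi(1-r)|t|/2}$ appear immediately. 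Your route is shorter and more transparent, at the cost of citing a slightly more processed form of Stirling; the paper's route is more self-contained but duplicates work. Your remark that the statement concerns only the regime $|t|\to\infty$ (so the isolated poles or zeros on the real axis are irrelevant) is exactly right and matches how the proposition is used downstream.
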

\begin{proof}
For $z \in \mathbb C \setminus \mathbb R_{-}$, recall the Stirling formula (see for instance \cite{watson}):
\[ \log \Gamma(z)=\lp z-\frac{1}{2} \rp \log z -z + \frac{1}{2} \log (2\pi) + \Omega(z), \]
where $\log$ is defined on its principal branch, and $\Omega$ is an analytic function of $z$. For $|\arg z|<\pi$ and $|z| \to \infty$, $\Omega$ can be written as
\[\Omega(z)=\sum_{j=1}^{N-1} \frac{B_{2j}}{2j(2j-1)z^{2j-1}}+R_N(z), \]
where $B_{2j}$ are the Bernoulli numbers $\lp B_2=\frac{1}{6}, \ B_4=-\frac{1}{30}, \ B_6=\frac{1}{42}, \text{etc} \rp$. Moreover, the error term satisfies: 
\[|R_N(z)| \leq \frac{|B_{2N}|}{2N(2N-1)} \cdot \frac{\sec^{2N}(\frac{\arg z}{2})}{|z|^{2N-1}}, \]
thus $R_N(z)$ is of order $\mathcal O \lp |z|^{-2N+1} \rp$ as $|z| \to \infty$ (see for instance \cite[(2.1.6)]{mellin}). For $s \notin (-\infty,0)$, it follows that:
\begin{align*}
\frac{\Gamma(s)}{\Gamma(rs)}=s^{-s(r-1)}e^{s(r-1)}r^{\frac{1}{2}-rs}e^{\Omega(s)-\Omega(rs)}.
\end{align*}
Let $s=a+ib$, $a \in \mathbb R$ fixed. As $|b| \to \infty$, the difference $\vert \Omega(s)-\Omega(rs) \vert \to 0$, thus  $\vert e^{\Omega(s)-\Omega(rs)} \vert \to 1$. Note that $\vert r^{\frac{1}{2}-rs} \vert =  \vert r^{\frac{1}{2}-ra} \vert$ and $\vert e^{(r-1)s} \vert = e^{(r-1)a}$, so these terms are bounded. We show in lemma \ref{ss} that for any $k \geq 0$, $\vert s \vert^{k} \vert s^s \vert $ goes to $0$ as $\Re s =a$ is fixed and $\vert \im s \vert$ tends to $\infty$. It follows that the quotient $\frac{\Gamma(s)}{\Gamma(rs)}$ indeed decreases in vertical lines faster than $\vert s \vert ^{-k}$, for any $k \geq 0$, uniformly in vertical strips.
\end{proof}

\begin{lemma}\label{ss}
Let $k \geq 0$. If $a \in \mathbb R$ is fixed and $\vert b \vert \to \infty$, then $\vert (a+ib)^{k+a+ib} \vert$ tends to zero.
\end{lemma}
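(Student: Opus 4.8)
The plan is to pass to the principal branch of the logarithm and reduce the statement to comparing a logarithmic quantity against a linear one. Writing $s = a + ib$ with $a$ fixed and $|b|$ large (so that $s \notin (-\infty,0]$), I would start from
\[
\bigl|(a+ib)^{k+a+ib}\bigr| = \bigl| e^{(k+s)\operatorname{Log} s} \bigr| = \exp\!\bigl(\Re\bigl((k+s)\operatorname{Log} s\bigr)\bigr),
\]
where $\operatorname{Log} s = \log|s| + i\arg s$ with $\arg s \in (-\pi,\pi)$. Expanding $(k+s)\operatorname{Log} s = ((k+a)+ib)(\log|s| + i\arg s)$ and taking real parts, the whole claim becomes the assertion that
\[
\Re\bigl((k+s)\operatorname{Log} s\bigr) = (k+a)\log|s| - b\,\arg s \longrightarrow -\infty \quad \text{as } |b| \to \infty .
\]

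Next I would estimate the two summands separately. The first is harmless: $(k+a)\log|s| = \tfrac{k+a}{2}\log(a^2+b^2) = O(\log|b|)$. For the second, I would use the elementary asymptotics of the argument, namely $\arg(a+ib) = \operatorname{sgn}(b)\,\tfrac{\pi}{2} - \arctan(a/b)$ once $|b|$ is large, together with $\arctan x = x + O(x^3)$, to get $b\,\arg s = \tfrac{\pi}{2}|b| - b\arctan(a/b) = \tfrac{\pi}{2}|b| - a + o(1)$. Hence $-b\,\arg s = -\tfrac{\pi}{2}|b| + a + o(1)$, and combining,
\[
\Re\bigl((k+s)\operatorname{Log} s\bigr) = (k+a)\log|s| - \tfrac{\pi}{2}|b| + a + o(1).
\]
Since the linear term $-\tfrac{\pi}{2}|b|$ dominates the logarithmic contribution $(k+a)\log|s|$ regardless of the sign of $k+a$, the exponent tends to $-\infty$, so $|(a+ib)^{k+a+ib}| \to 0$, which is the claim.

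I do not expect a genuine obstacle here: the argument is purely elementary real analysis once the modulus is written via $\operatorname{Log}$. The only points deserving a little care are the sign bookkeeping in $\arg(a+ib)$ for $b$ of either sign, and — if one wants the strengthening actually used in the proof of Proposition~\ref{rapgamma} — checking that the decay is uniform for $a$ ranging in a bounded interval, which follows by the same estimates since all implied constants depend only on a bound for $|a|$.
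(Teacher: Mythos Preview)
Your argument is correct. Both you and the paper begin from the same identity
\[
\bigl|s^{k+s}\bigr| = \exp\bigl((k+a)\log|s| - b\,\arg s\bigr),
\]
so the overall strategy coincides. The execution differs: the paper substitutes $b = a\tan(\arg s)$, rewrites the exponent as a function of $t = \tan(\arg s)$, and then splits into cases according to the sign of $a$ and to whether $\arg s$ approaches $\pm\pi/2$ from above or below. Your route is more direct: you compute the asymptotic $b\,\arg s = \tfrac{\pi}{2}|b| - a + o(1)$ via the elementary identity $\arg(a+ib) = \operatorname{sgn}(b)\tfrac{\pi}{2} - \arctan(a/b)$, which avoids the case distinction entirely, treats $a=0$ without modification, and makes the uniformity in $a$ over bounded intervals (needed in Proposition~\ref{rapgamma}) transparent, since every error term depends only on a bound for $|a|$.
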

\begin{proof}
Let $s=a+ib \notin (-\infty,0)$ and set $\log (a+ib)=x+iy$. Then $x=\log \sqrt{a^2+b^2}$, $y=\arg s \in (-\pi,\pi)$, hence
\[\vert s^{s+k} \vert=\vert e^{(k+a+ib) \log (a+ib)} \vert=  e^{(k+a)x-by}  =
 e^{(k+a) \log \sqrt{a^2+b^2}-b \arg s} . \]
Since $b=\tan \arg s \cdot a$, the exponent is equal to:
\begin{equation}\label{exponent}
(k+a)\log \sqrt{a^2+b^2}-b \arg s = (k+a) \log a +\frac{k+a}{2}  \log \lp 1+\tan^2 \arg s \rp -a \tan \arg s \cdot \arg s.
\end{equation}
If $a>0$,  then $\arg s\nearrow \frac{\pi}{2}$ or $ \arg s \searrow -\frac{\pi}{2}$, and in both cases $t:=\tan \arg s$ tends to $\infty$. The exponent \eqref{exponent} behaves as the function $t \longmapsto \log (1+t^2)-t$, therefore as $t \to \infty$, the exponent goes to $-\infty$ and the statement of the claim follows.  

If $a<0$, then $\arg s \searrow \frac{\pi}{2}$ or $\arg s \nearrow -\frac{\pi}{2}$. In the first case when $\arg s \searrow \frac{\pi}{2}$, it follows that $ t = \tan \arg s \to -\infty$. The exponent \eqref{exponent} behaves as $   \pm \log (1+t^2)+t $, hence the conclusion follows. While if  $\arg s \nearrow -\frac{\pi}{2}$, then $t \to \infty$, and the exponent \eqref{exponent} behaves as $ \pm \log (1+t^2)-t   $, thus the exponent tends again to $-\infty$. Therefore $\vert s^{k+s} \vert$ goes to zero, which ends the proof.
\end{proof}

\section{Link between the complex powers of $\DD$ and the heat kernel of $\DD^r$}
\begin{proposition}[Inverse Mellin Formula]\label{inversemellin}
For $\Re \tau >0$, the operators $e^{-t\DD^r}$ and $\DD^{-s}$ are related by the following formula:
\[e^{-t \DD^r} -\PP_{\Ker \DD}= \frac{1}{2\pi i}\int_{\Re s= \tau} t^{-s} \Gamma(s)  \DD^{-rs}  ds. \]
\end{proposition}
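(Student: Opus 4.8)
The plan is to prove this operator identity the same way proposition \ref{mellin} was proved: by evaluating both sides on an orthonormal Hilbert basis $\{\Phi_j\}$ of $\DD$-eigensections and invoking density. The conceptual point is that the right-hand side, once seen to be an absolutely convergent operator-valued integral, acts on each eigensection through the classical Cahen--Mellin (inverse Mellin) representation of the exponential function.

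First I would verify that the right-hand side defines a bounded operator on $L^2(M,\EE)$. Fix $\tau>0$ (and $t>0$), and let $\lambda_1>0$ denote the smallest nonzero eigenvalue of $\DD$, which exists because the spectrum is discrete with only $+\infty$ as accumulation point. On the vertical line $\Re s=\tau$ the operator $\DD^{-rs}$ annihilates $\Ker\DD$ and multiplies $E_\lambda$ by $\lambda^{-rs}$, of modulus $\lambda^{-r\tau}\leq\lambda_1^{-r\tau}$ for every $\lambda\geq\lambda_1$; hence $\|\DD^{-rs}\|_{L^2\to L^2}\leq\lambda_1^{-r\tau}$, uniformly in $\im s$. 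Combining this bound with $|t^{-s}|=t^{-\tau}$ and the rapid (in fact exponential, by Stirling) decay of $\Gamma$ along $\Re s=\tau$ used in proposition \ref{rapgamma}, the integral $\frac{1}{2\pi i}\int_{\Re s=\tau}t^{-s}\Gamma(s)\DD^{-rs}\,ds$ converges absolutely in the Banach algebra of bounded operators on $L^2(M,\EE)$, so it may be applied to a section $\Phi$ by pulling $\Phi$ inside the integral.

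Next I would compare the two sides eigenspace by eigenspace. On $\Ker\DD$ both sides vanish, since $\DD^{-rs}$ is zero there while $e^{-t\DD^r}-\PP_{\Ker\DD}=\id-\id$ there. For $\Phi\in E_\lambda$ with $\lambda>0$, the preceding step reduces the action of the right-hand side on $\Phi$ to the scalar integral $\frac{1}{2\pi i}\int_{\Re s=\tau}\Gamma(s)\,(t\lambda^r)^{-s}\,ds$, with $(t\lambda^r)^{-s}$ taken on the principal branch, which is legitimate because $t\lambda^r>0$. Now $\Gamma(s)=\int_0^\infty z^{s-1}e^{-z}\,dz$ is the Mellin transform of $z\mapsto e^{-z}$ on $\{\Re s>0\}$ and is integrable on $\Re s=\tau$, so the Mellin inversion theorem gives $e^{-z}=\frac{1}{2\pi i}\int_{\Re s=\tau}\Gamma(s)z^{-s}\,ds$ for every $z>0$ and every $\tau>0$; taking $z=t\lambda^r$ shows the right-hand side acts on $\Phi$ as multiplication by $e^{-t\lambda^r}$, which is exactly the action of $e^{-t\DD^r}-\PP_{\Ker\DD}$ on $E_\lambda$. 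Since both bounded operators agree on the Hilbert basis $\{\Phi_j\}$, they coincide, and hence so do their Schwartz kernels.

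The one genuinely delicate step is the interchange of the contour integral with the operator action in the middle step; this is secured by the uniform operator bound and the exponential decay of $\Gamma$ from the first step, which together yield absolute convergence of the integral in operator norm. The scalar ingredient --- the inverse Mellin (Cahen--Mellin) integral for $e^{-z}$ --- is classical, and its hypotheses (continuity of $e^{-z}$, integrability of $\Gamma$ on a vertical line $\Re s=\tau>0$) hold automatically here, so no further work is needed there.
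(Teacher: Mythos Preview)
Your proof is correct and follows the same approach as the paper, which simply checks the identity on eigensections and invokes the Hilbert basis property; you have merely filled in the details the paper omits (absolute convergence of the operator-valued integral via the uniform bound $\|\DD^{-rs}\|\leq\lambda_1^{-r\tau}$ and Stirling, and the scalar Cahen--Mellin identity $e^{-z}=\tfrac{1}{2\pi i}\int_{\Re s=\tau}\Gamma(s)z^{-s}\,ds$). One minor remark: your reference to proposition~\ref{rapgamma} for the decay of $\Gamma$ is slightly oblique since that proposition treats the ratio $\Gamma(s)/\Gamma(rs)$, but the underlying Stirling estimate you actually need is indeed invoked in its proof, so there is no gap.
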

\begin{proof}
The equality holds on each eigensection $\Phi_j$ corresponding to an eigenvalue $\lambda_j \in \Spec \DD$. Since $\lbrace \Phi_j \rbrace_{j}$ is a Hilbert basis, the result follows.   
\end{proof}

Set $\tau > \frac{n}{2r}$. Then the Schwartz kernel $q_{-rs}$ of $\DD^{-rs}$ is continuous and by the inverse Mellin formula, we get an identity which relates the Schwartz kernels $h_t$ and $q_{-rs}$:
\begin{align*}
 h_t(x,y)- \PP_{\Ker \DD}(x,y) ={}& \tfrac{1}{2 \pi i} \int_{\Re s=\tau} t^{-s}\Gamma(s) q_{-rs}(x,y) ds \\
={}&\tfrac{1}{2 \pi i} \int_{\Re s = \tau} t^{-s}\frac{\Gamma(s)}{\Gamma(rs)} \cdot \Gamma(rs)q_{-rs}(x,y) ds .
\end{align*}
Now let $k >0$. By changing $\tau$ to $\tau + \epsilon$ (for a small $\epsilon>0$) if needed, we can assume that $\tau-k \notin \lbrace \frac{n}{2}-j :j \in \mathbb N \rbrace \cup \{ 0 \}$. Using propositions \ref{umbenzi} and \ref{rapgamma}, we can apply the residue formula and move the line of integration to the left:
\begin{equation}\label{nke}
\begin{aligned}
 h_t(x,y) ={}&\tfrac{1}{2 \pi i} \int_{\Re s = \tau -k} t^{-s}\frac{\Gamma(s)}{\Gamma(rs)} \cdot \Gamma(rs)q_{-rs}(x,y) ds  \\
 {}& +\sum_{s \in -\mathbb N \cup \lbrace \frac{n-2j}{2r} : \ j \in \mathbb N \rbrace} \Res_{s} \lp t^{-s} \Gamma(s) q_{-rs}(x,y) \rp + \PP_{\Ker \DD}(x,y).
\end{aligned}
\end{equation}
Notice that $-\mathbb N \cup \lbrace \frac{n-2j}{2r} : \ j \in \mathbb N \rbrace$ is the set of all possible poles of $s \mapsto \Gamma(s) q_{-rs}(x,y)$, but some of them might actually be regular points. We will study the sum \eqref{nke} in detail in theorems \ref{asafaradiag} and \ref{asdiag}.

Let $K$ be a compact set in $M \times M \setminus \Diag$ and $l \in \mathbb N$. Remark that the integral term in \eqref{nke} is of order $\mathcal O \lp t^{k-\tau} \rp$ in $\mathcal C^l (K, \EE \boxtimes \EE^*)$. Indeed,
\begin{align*}
\left\Vert  \int_{\Re s = \tau-k} t^{-s}\Gamma(s){q_{-rs}}_{\vert_K} ds   \right\Vert_l \leq t^{-\tau+k} \cdot  \int_{ s = \tau-k+i u} \left\Vert \frac{\Gamma(s)}{\Gamma(rs)} \cdot \Gamma(rs) {q_{-rs}}_{\vert_K}  \right\Vert_l du,
\end{align*}  
and using again propositions \ref{umbenzi} and \ref{rapgamma}, the claim follows. Furthermore, when $k$ goes to $\infty$, we get:
\begin{equation}\label{ht1}
{h_t}_{\vert_K} \stackrel{t \searrow 0}{\sim} \sum_{\alpha=0}^{\infty} t^{\alpha} \cdot \Res_{s=-\alpha} \lp \Gamma(s) {q_{-rs}}_{\vert_K} \rp + t^0 \cdot {\PP_{\Ker \DD}}_{\vert_K}, 
\end{equation}
The meaning of the asymptotic sign in \eqref{ht1} is that if we set $h_t^N$ to be the right hand side in \eqref{ht1} restricted to $\alpha \leq N$, then the difference $\vert \partial_t^j \lp {h_t}_{\vert_K}- h_t^N \rp  \vert$ is of order $\mathcal O (t^{N+1-j})$ in $\mathcal C^l(K, \EE \boxtimes \EE^*)$,  for any $N,j \in \mathbb N$. 

Remark that using again propositions \ref{umbenzi} and \ref{rapgamma}, the integral term in \eqref{nke} is of order $\mathcal O \lp t^{k-\tau} \rp$ in $\mathcal C^l (\Diag, \EE \otimes \EE^*)$. Therefore when $k$ tends to $\infty$, we obtain
\begin{equation}\label{ht2}
{h_t}_{\vert_{\Diag}} \stackrel{t \searrow 0}{\sim} \sum_{\alpha \in \lp -\mathbb N \rp \cup \lbrace \frac{n-2j}{2r}: j \in \mathbb N \rbrace } t^{-\alpha} \cdot \Res_{s=\alpha} \lp \Gamma(s) {q_{-rs}}_{\vert_{\Diag}} \rp + t^0 \cdot {\PP_{\Ker \DD}}_{\vert_{\Diag}}, 
\end{equation}
in the sense of the following:
\begin{definition}
Consider $l \in \mathbb N$ and let $A,B \subset \mathbb R$. We say that $ {h_t}_{\vert_{\Diag}} \stackrel{t \searrow 0}{\sim} \sum_{\alpha \in A} t^\alpha {c_\alpha} + \sum_{\beta \in B} t^{\beta} \log t \cdot  c_{\beta} $ if for any $k,N \in \mathbb N$, the difference
\[  \partial_t^j \lp {h_t}_{\vert_{\Diag}}- \sum_{\alpha \leq N} t^\alpha {c_\alpha} - \sum_{\beta \leq N}  t^{\beta} \log t \cdot  c_{\beta}  \rp  \]
is of order $\mathcal O (t^{N+1-j} \log t)$ in $\mathcal C^l(\Diag, \EE \otimes \EE^*)$.
\end{definition}

\section{The asymptotic expansion of $h_t$ away from the diagonal}
\begin{theorem}\label{asafaradiag}
The Schwartz kernel $h_t$ of the operator $e^{-t\DD^r}$ is $\mathcal C^{\infty}$ on $[0,\infty)\times \lp M \times M \setminus \Diag \rp$. Furthermore, let $K \subset M \times M \setminus \Diag$ be a compact set. Then the Taylor series of ${h_t}_{\vert_{K}}$ as $t \searrow 0$ is the following:
\[{h_t}_{\vert_K} \stackrel{t \searrow 0}{\sim} \sum_{j=1}^{\infty} t^{j} {q_{rj}}_{\vert_K} \frac{(-1)^j}{j!} .  \]
Moreover, if $r=\frac{\alpha}{\beta}$ is rational with $\alpha,\beta$ coprime, then the coefficient of $t^j$ vanishes for $j \in \beta \mathbb N^*$.
\end{theorem}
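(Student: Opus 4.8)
The plan is to derive the expansion directly from equation \eqref{ht1}, which already gives
\[
{h_t}_{\vert_K} \stackrel{t \searrow 0}{\sim} \sum_{\alpha=0}^{\infty} t^{\alpha} \cdot \Res_{s=-\alpha} \lp \Gamma(s) {q_{-rs}}_{\vert_K} \rp + t^0 \cdot {\PP_{\Ker \DD}}_{\vert_K},
\]
so the whole problem reduces to evaluating the residue of $\Gamma(s) {q_{-rs}}_{\vert_K}$ at each non-positive integer $s=-\alpha$. First I would recall from proposition \ref{poliafaradiag} that on a compact set $K$ disjoint from the diagonal, the map $s \mapsto {q_{-s}}_{\vert_K}$ is \emph{entire}; hence $s \mapsto {q_{-rs}}_{\vert_K}$ is entire as well, and all the poles of the integrand come solely from the simple poles of $\Gamma(s)$ at $s=0,-1,-2,\dots$, with $\Res_{s=-\alpha}\Gamma(s) = (-1)^\alpha/\alpha!$. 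Therefore
\[
\Res_{s=-\alpha}\lp \Gamma(s){q_{-rs}}_{\vert_K}\rp = \frac{(-1)^\alpha}{\alpha!}\, {q_{r\alpha}}_{\vert_K}.
\]
For $\alpha=0$ this gives ${q_0}_{\vert_K} = -{\PP_{\Ker \DD}}_{\vert_K}$ (the value computed in proposition \ref{poliafaradiag}), which exactly cancels the extra $t^0 \cdot {\PP_{\Ker \DD}}_{\vert_K}$ term in \eqref{ht1}; so the $t^0$-coefficient is zero and the sum starts at $j=1$, yielding
\[
{h_t}_{\vert_K} \stackrel{t \searrow 0}{\sim} \sum_{j=1}^{\infty} t^{j}\, {q_{rj}}_{\vert_K}\, \frac{(-1)^j}{j!},
\]
as claimed. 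The smoothness of $h_t$ up to $t=0$ on $[0,\infty)\times(M\times M \setminus \Diag)$ follows from the remainder estimate already recorded after \eqref{nke}: the shifted-contour integral is $\mathcal O(t^{k-\tau})$ in every $\mathcal C^l(K)$ norm, and letting $k\to\infty$ shows the asymptotic series is a genuine Taylor expansion of a function vanishing to first order at $t=0$.

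For the rationality refinement, suppose $r=\frac{\alpha}{\beta}$ with $\alpha,\beta$ coprime and let $j=\ell\beta$ for some $\ell\in\mathbb N^*$. Then $rj = \frac{\alpha}{\beta}\cdot\ell\beta = \ell\alpha$ is a \emph{positive integer}, so the coefficient of $t^j$ is (up to the nonzero constant $(-1)^j/j!$) the Schwartz kernel ${q_{\ell\alpha}}_{\vert_K}$ of the \emph{nonnegative integer} power $\DD^{\ell\alpha}$, restricted to $K$. But $\DD^{\ell\alpha}$ is a differential operator, hence its Schwartz kernel is supported on the diagonal, so ${q_{\ell\alpha}}_{\vert_K}=0$ on any $K$ disjoint from $\Diag$; equivalently, from proposition \ref{poliafaradiag}, ${q_{-s}}_{\vert_K}$ vanishes at every negative integer $s$, so ${q_{-rs}}_{\vert_K}=0$ at $s=-j$ precisely when $-rj$ is a negative integer, i.e. when $\beta \mid j$. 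This kills the coefficient of $t^j$ for all $j\in\beta\mathbb N^*$. (Note that for these indices both $\Gamma(s)$ and $q_{-rs}$ behave nicely: $\Gamma$ has its usual simple pole at $s=-j$, while $q_{-rs}$ has a simple zero there from proposition \ref{poliafaradiag}, so the product is regular and the residue is genuinely zero rather than merely finite.)

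I do not anticipate a serious obstacle here: the theorem is essentially a bookkeeping consequence of the two facts already established, namely that $q_{-s}$ is entire away from the diagonal and vanishes at negative integers (proposition \ref{poliafaradiag}), together with the contour-shifting identity \eqref{nke}–\eqref{ht1} and its remainder estimate. The only point requiring mild care is making the asymptotic statement precise in the $\mathcal C^l(K)$ topology uniformly in $t$ — but this is exactly what the bound $\mathcal O(t^{k-\tau})$ after \eqref{nke}, obtained from propositions \ref{umbenzi} and \ref{rapgamma}, provides, so I would simply invoke it. If one wanted to avoid citing the "kernel of a differential operator is diagonal" fact, the cleaner route is to phrase the vanishing entirely through the zero of ${q_{-s}}_{\vert_K}$ at negative integers, which is already part of proposition \ref{poliafaradiag}.
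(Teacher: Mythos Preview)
Your proposal is correct and follows essentially the same approach as the paper: both arguments invoke proposition~\ref{poliafaradiag} to see that $s\mapsto q_{-rs}|_K$ is entire (so the residues in \eqref{ht1} come only from the simple poles of $\Gamma$), cancel the $t^0$ term via $q_0|_K=-\PP_{\Ker\DD}|_K$, and use the vanishing of $q_{-s}|_K$ at negative integers for the rational refinement. Your additional remark that $\DD^{\ell\alpha}$ is a differential operator with diagonally supported kernel is a valid alternative justification not used in the paper, and the only place where the paper is slightly more explicit is in checking that $t$-derivatives of the shifted-contour integral are also controlled (via the extra polynomial factor $(-s)(-s-1)\cdots(-s-j+1)$, absorbed by the rapid decay of $\Gamma(s)/\Gamma(rs)$).
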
 
\begin{proof}
Let $j \in \mathbb N$. Using propositions \ref{umbenzi} and \ref{rapgamma}, $(-s)(-s-1)...(-s-j+1) t^{-s-j} \frac{\Gamma(s)}{\Gamma(rs) } \Gamma(rs){q_{-rs}}_{\vert_{K}} $ is $L^1$ integrable on $ \Re s = \tau-k$ in $\mathcal C^l (K, \EE \boxtimes \EE^*)$, for sufficiently large $k$ and for any $l \in \mathbb N$. It follows that $h_t$ is $\mathcal C^{\infty}$ on $(0,\infty)\times \lp M \times M \setminus \Diag \rp$. By proposition \ref{poliafaradiag}, the function $s \mapsto q_{-rs}(x,y)$ is entire for any $(x,y) \in K$. Since $\Res_{s=-j} \Gamma(s)= \frac{(-1)^j}{j!} $, using \eqref{ht1} we get:
\[{h_t}_{\vert_K} \stackrel{t \searrow 0}{\sim} \sum_{j=0}^{\infty} t^{j} {q_{rj}}_{\vert_K} \frac{(-1)^j}{j!} +{\PP_{\Ker \DD}}_{\vert_K}.\]
We obtained in the proof of proposition \ref{poliafaradiag} that ${q_0}_{\vert_K}=-{\PP_{\Ker \DD}}_{\vert_K}$, thus:
\[{h_t}_{\vert_K} \stackrel{t \searrow 0}{\sim} \sum_{j=1}^{\infty} t^{j} {q_{rj}}_{\vert_K} \frac{(-1)^j}{j!},   \]
and therefore $h_{t_{\vert_K}}$ is $\mathcal C^{\infty}$ also at $t=0$, and vanishes at order $1$.  Moreover, using again proposition \ref{poliafaradiag}, if $r=\frac{\alpha}{\beta}$ is rational and $j$ is a non-zero  multiple of $\beta$, then $q{_{rj}}_{\vert_K}  \equiv 0$ and the conclusion follows.   
\end{proof}

\section{The asymptotic expansion of $h_t$ along the diagonal} 
To obtain the coefficients in the asymptotic of $h_t$ along the diagonal as $t\searrow 0$, we need to compute the residues from \eqref{ht2}. Some of them are related to the heat coefficients $a_j$'s of $p_t$ due to proposition \ref{poligammaq}. We will distinguish three cases. If $n$ is even, $\Gamma(s)q_{-rs}(x)$ has simple poles in $\lbrace \frac{n}{2r},\frac{n-2}{2r},...,\frac{2}{2r} \rbrace \cup \lbrace 0,-1,... \rbrace$ and the residues will give rise to real powers of $t$. If n is odd and either $r$ is irrational or $r$ is rational with odd denominator, $\Gamma(s)q_{-rs}(x)$ has simple poles in $\lbrace 0,-1...\rbrace \cup \lbrace \frac{n-2j}{2r} : j=0,1,...   \rbrace$. Otherwise, if $n$ is odd and $r$ is rational with even denominator, then there exist some double poles which give rise to logarithmic terms in the asymptotic  expansion of $h_t$.

\begin{theorem}\label{asdiag}
Let $a_j(x,x)$ be the coefficients in \eqref{asp_t} of the heat kernel $p_t$ of the non-negative self-adjoint generalized Laplacian $\DD$. The asymptotic expansion of the Schwartz kernel $h_t$ of the operator $e^{-t\DD^r}$, $r \in (0,1)$ along the diagonal when $t\searrow 0$ is the following:   
\begin{itemize}
\item[$(1)$] If $n$ is even, then
\[ {h_t}_{\vert_{\Diag}} \stackrel{t \searrow 0}{\sim} \sum_{j=0}^{n/2-1} t^{- \frac{n-2j}{2r}} \cdot A_{-\frac{n-2j}{2r}} + a_{n/2} + \sum_{j=1}^{\infty} t^j A_j.  \]
If $r=\frac{\alpha}{\beta}$ is rational, for $j=l \beta$, $ l \in \mathbb N^*$, we obtain that $q_{rj}(x,x)=(-1)^j \cdot j! \cdot a_{\frac{n}{2}+l \alpha}(x,x)$, and the coefficient of $t^{l \beta}$ can be described more precisely as:
\[ A_{l \beta}=a_{\frac{n}{2}+l \alpha}.   \]
\item[$(2)$] 
If $n$ is odd and either $r \in \mathbb R \setminus \mathbb Q$ or the denominator of $r$ is odd, then
\[ {h_t}_{\vert_{\Diag}} \stackrel{t \searrow 0}{\sim} \sum_{j=0}^{(n-1)/2} t^{- \frac{n-2j}{2r}} \cdot A_{-\frac{n-2j}{2r}} + \sum_{j=1}^{\infty} t^j \cdot A_j + \sum_{j=1}^{\infty} t^{\frac{2j+1}{2r}} \cdot A_{\frac{2j+1}{2r}} .  \]
Moreover, if $r=\frac{\alpha}{\beta}$ is rational and $\beta$ is odd, then $ A_{l \beta} \equiv 0$ for any $l \in \mathbb N^*$. 
\item[$(3)$] If $n$ is odd, $r=\frac{\alpha}{\beta}$ is rational and its denominator $\beta$ is even, then 
\begin{align*}
{h_t}_{\vert_{\Diag}} \stackrel{t \searrow 0}{\sim} {}&\sum_{j=0}^{(n-1)/2} t^{- \frac{n-2j}{2r}} \cdot A_{-\frac{n-2j}{2r}}  +  
\sum^{\infty}_{\substack{j=1  \\ 
\mathclap{\alpha \nmid 2j+1}}}\mkern-6mu t^{\frac{2j+1}{2r}} \cdot A_{\frac{2j+1}{2r}} + 
\sum^{\infty}_{\substack{j=1  \\ 
\mathclap{ \frac{\beta}{2} \nmid j}}}\mkern-6mu  t^j \cdot A_j \\ 
{}& + \sum^{\infty}_{\substack{l=1  \\ 
\mathclap{ l \text{ odd} }}}\mkern-6mu  t^{l \frac{\beta}{2}} \cdot A_{l \frac{\beta}{2}}
+ \sum^{\infty}_{\substack{l=1  \\ 
\mathclap{ l \text{ odd} }}}\mkern-6mu  t^{l \frac{\beta}{2}} \log t \cdot B_{l \frac{\beta}{2}}.
\end{align*}
\end{itemize}
In all these cases, the coefficients are
\begin{align*}
{}&A_{-\frac{n-2j}{2r}}(x)= \frac{\Gamma \lp \frac{n-2j}{2r} \rp}  {\Gamma \lp \frac{n-2j}{2} \rp} \cdot \frac{1}{r} \cdot a_j(x,x),  &&
A_{j}(x)=\frac{(-1)^j}{j!} \cdot q_{rj}(x,x), \\
{}&A_{\frac{2j+1}{2r}}(x)=\frac{\Gamma \lp - \frac{2j+1}{2r} \rp}  {\Gamma \lp - \frac{2j+1}{2} \rp} \cdot \frac{1}{r} \cdot a_{\frac{n+2j+1}{2}}(x,x), &&
B_{l\frac{\beta}{2}}(x)=\frac{(-1)^{l\frac{\beta}{2}}}{r \lp l\frac{\beta}{2}  \rp ! \Gamma \lp - l\frac{\beta}{2} \cdot r \rp} \cdot a_{\frac{n+l \alpha}{2}}(x,x), 
\end{align*}
\[ A_{l \frac{\beta}{2}}(x)=   \frac{(-1)^{l \frac{\beta}{2}}}{(l \frac{\beta}{2})! \Gamma(-rl \frac{\beta}{2})} \cdot \FP_{s=-l \frac{\beta}{2}} \lp \Gamma(rs)q_{-rs}(x,x) \rp  +  \FP_{s=-l \frac{\beta}{2}} \lp \frac{\Gamma(s)}{\Gamma(rs)} \rp \cdot \frac{a_{\frac{n+l\alpha}{2}(x,x)}}{r} .  \]

\end{theorem}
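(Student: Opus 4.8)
The plan is to start from the residue expansion \eqref{ht2}: everything is reduced to computing $\Res_{s=s_0}\lp t^{-s}\Gamma(s)q_{-rs}(x,x)\rp$ at the candidate poles $s_0\in(-\mathbb N)\cup\lbrace\tfrac{n-2j}{2r}:j\in\mathbb N\rbrace$ of the integrand, deciding which ones vanish, and collecting the resulting powers of $t$. The fact that what one obtains is an asymptotic expansion in the stated sense, with remainder $\mathcal O(t^{N+1-j}\log t)$ in $\mathcal C^l$, is inherited verbatim from the contour–shift estimate \eqref{nke} already established via propositions \ref{umbenzi} and \ref{rapgamma}, the extra $\log t$ being produced precisely by the double poles that are crossed.

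The decisive tool is the factorisation $\Gamma(s)q_{-rs}(x,x)=\tfrac{\Gamma(s)}{\Gamma(rs)}\cdot\bigl(\Gamma(rs)q_{-rs}(x,x)\bigr)$. By proposition \ref{poligammaq}, applied with $s$ replaced by $rs$, the second factor is meromorphic in $s$ with only \emph{simple} poles, at $s\in\lbrace 0\rbrace\cup\lbrace\tfrac{n-2j}{2r}:j\in\mathbb N\rbrace$, with residue $\tfrac1r a_j(x,x)$ at $s=\tfrac{n-2j}{2r}$ for $j\neq\tfrac n2$ and residue $\tfrac1r\bigl(a_{n/2}(x,x)-\PP_{\Ker \DD}(x,x)\bigr)$ resp.\ $-\tfrac1r\PP_{\Ker \DD}(x,x)$ at $s=0$ according to the parity of $n$. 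The first factor $\tfrac{\Gamma(s)}{\Gamma(rs)}$ is holomorphic and nonvanishing everywhere except for a simple pole at each $s=-k$, $k\geq 1$, with $rk\notin\mathbb N$ — since $\Gamma(s)$ has simple poles at the non-positive integers while $1/\Gamma(rs)$ is entire with simple zeros exactly at $rs\in\lbrace 0,-1,-2,\dots\rbrace$ — and it is regular at $s=0$. Multiplying, one reads off the order of the pole of $\Gamma(s)q_{-rs}(x,x)$ at each $s_0$ from the two factors and computes the Laurent coefficients accordingly.

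Three types of pole arise, and I would organise the argument along them. (i) Fractional poles $s_0=\tfrac{n-2j}{2r}\notin(-\mathbb N)\cup\lbrace 0\rbrace$: the first factor is regular and nonzero, the second has a simple pole, so the product has a simple pole whose residue, times $t^{-s_0}$, gives the terms $A_{-\frac{n-2j}{2r}}$ (negative powers of $t$, $j<\tfrac n2$) and, for $n$ odd and $j>\tfrac n2$, the fractional terms $A_{\frac{2j+1}{2r}}$. (ii) Ordinary integer poles $s_0=-k$, $k\geq 1$, with $rk\notin\mathbb N$ and the second factor regular there: the residue is $\Res_{s=-k}\Gamma(s)\cdot q_{rk}(x,x)=\tfrac{(-1)^k}{k!}q_{rk}(x,x)$, giving $A_k$; the pole at $s_0=0$ gives, after adding $\PP_{\Ker \DD}$, the constant $a_{n/2}$ when $n$ is even and nothing when $n$ is odd. (iii) Collisions, where the three cases separate and which is the heart of the proof. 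When $n$ is even and $k=l\beta$ one has $rk=l\alpha\in\mathbb N$, so the first factor is \emph{regular} while the second picks up the simple pole of $F$ at $rs=\tfrac n2-(\tfrac n2+l\alpha)$; the product still has a simple pole, and since its residue is a universal multiple of $a_{\frac n2+l\alpha}(x,x)$ this gives the closed form for $A_{l\beta}$ (equivalently for $q_{rj}(x,x)$). When $n$ is odd with $r$ irrational, or rational with odd $\beta$, a short count using $\gcd(2\alpha,\beta)=1$ shows that no $\tfrac{n-2j}{2r}$ is a negative integer and that whenever $rk\in\mathbb N$ the point $-rk$ is a negative \emph{integer}, hence never of the form $\tfrac n2-j$; thus the second factor is regular at every collision — in fact $q_{-rs}(x,x)$ vanishes there because $n$ is odd — so all poles are simple, no logarithms appear, and $A_{l\beta}\equiv 0$. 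When $n$ is odd and $\beta$ is even, ``both factors singular at $s=-k$'' forces, via $\gcd(\alpha,\tfrac\beta2)=1$ and the parity of $n$, $k=l\tfrac\beta2$ with $l$ odd and $\tfrac n2-j=-\tfrac{l\alpha}2$ with $j=\tfrac{n+l\alpha}2\in\mathbb N$, and there $\Gamma(s)q_{-rs}(x,x)$ has a genuine \emph{double} pole. Pairing $t^{-s}=t^{l\beta/2}\bigl(1-(s+\tfrac{l\beta}2)\log t+\dots\bigr)$ with that Laurent tail, the residue contributes a $t^{l\beta/2}\log t$ term with coefficient $-\Res_{s_0}\bigl(\tfrac{\Gamma(s)}{\Gamma(rs)}\bigr)\Res_{s_0}\bigl(\Gamma(rs)q_{-rs}(x,x)\bigr)$ and a $t^{l\beta/2}$ term with coefficient $\Res_{s_0}\bigl(\tfrac{\Gamma(s)}{\Gamma(rs)}\bigr)\FP_{s_0}\bigl(\Gamma(rs)q_{-rs}(x,x)\bigr)+\FP_{s_0}\bigl(\tfrac{\Gamma(s)}{\Gamma(rs)}\bigr)\Res_{s_0}\bigl(\Gamma(rs)q_{-rs}(x,x)\bigr)$; inserting $\Res_{s_0}\tfrac{\Gamma(s)}{\Gamma(rs)}=\tfrac{(-1)^{l\beta/2}}{(l\beta/2)!\,\Gamma(-rl\beta/2)}$ and $\Res_{s_0}\bigl(\Gamma(rs)q_{-rs}(x,x)\bigr)=\tfrac1r a_{\frac{n+l\alpha}2}(x,x)$ reproduces the stated $B_{l\beta/2}$ and $A_{l\beta/2}$. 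The same divisibility bookkeeping shows that $t^{l\beta}$ carries coefficient $0$ (there the first factor is regular and $q_{-rs}(x,x)$ vanishes) and that the fractional powers $t^{\frac{2j+1}{2r}}$ with $\alpha\mid 2j+1$ coincide with the double poles, which accounts for the restrictions $\tfrac\beta2\nmid j$ and $\alpha\nmid 2j+1$ in the statement.

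The main obstacle is case (3): one must (a) carry out the number-theoretic classification of the poles — deciding for each $j\geq 1$ whether $t^j$ multiplies a double pole ($j=l\tfrac\beta2$, $l$ odd, producing a $\log t$), a simple pole, or a vanishing coefficient ($\beta\mid j$), which is where all the divisibility hypotheses on $\alpha,\beta,j$ enter — and (b) extract the two Laurent coefficients at each double pole in terms of the residues and finite parts of the two factors and match them to the closed forms for $A_{l\beta/2}$, $B_{l\beta/2}$. By contrast, the residues at the simple poles are immediate from proposition \ref{poligammaq}, and the asymptotic-expansion claim requires no new work beyond the estimates already in place.
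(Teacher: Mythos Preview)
Your proposal is correct and follows essentially the same route as the paper: start from the residue expansion \eqref{nke}--\eqref{ht2}, factor $\Gamma(s)q_{-rs}=\frac{\Gamma(s)}{\Gamma(rs)}\cdot\bigl(\Gamma(rs)q_{-rs}\bigr)$, read off the pole structure of the second factor from proposition~\ref{poligammaq}, and then sort the candidate poles into simple ones and genuine collisions, with the Laurent computation at the double poles in case~(3) producing the logarithmic terms; your organisation by pole type rather than by the three cases of the statement is purely cosmetic, and the divisibility bookkeeping you outline is exactly what the paper carries out. One small remark: your identification of the $t^{l\beta/2}\log t$ coefficient as $-\Res\cdot\Res$ is the correct outcome of the Laurent calculation (from $t^{-s}=t^{k}\bigl(1-(s+k)\log t+\cdots\bigr)$), so when you ``insert'' the residues you will find a sign opposite to the displayed $B_{l\beta/2}$ rather than literal agreement.
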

\begin{proof}
We compute the coefficients from \eqref{ht2} by using proposition \ref{poligammaq}. 
\subsection*{The case when $n$ is even}
For $j \in \{0,1,...,n/2-1 \}$, we have: 
\begin{equation}\label{n-2j}
\Res_{s=\frac{n-2j}{2r}} \lp t^{-s} \frac{\Gamma(s)}{\Gamma(rs)} \Gamma(rs)q_{-rs}(x,x) \rp  =t^{-\frac{n-2j}{2r}} \cdot \frac{\Gamma(\frac{n-2j}{2r})}{\Gamma(\frac{n-2j}{2})} \cdot \frac{a_j(x,x)}{r}.
\end{equation}
The residue in $s=0$ is given by:
\begin{align*}
\Res_{s=0} \lp t^{-s} \Gamma(s) q_{-rs}(x,x) \rp ={}& \Res_{s=0} \lp t^{-s} \frac{\Gamma(s)}{\Gamma(rs)} \Gamma(rs)q_{-rs}(x,x) \rp \\
={}& r \cdot \frac{1}{r} \lp a_{\frac{n}{2}}(x,x) - \PP_{\Ker \DD}(x,x) \rp=a_{\frac{n}{2}}(x,x)-\PP_{\Ker \DD}(x,x),
\end{align*}
thus the coefficient of $t^0$ in the asymptotic expansion \eqref{ht2} is $a_{\frac{n}{2}}(x,x)$. 

\subsubsection*{The case when $n$ is even and $r$ is irrational} 
Let $j \in \mathbb N^*$. Then: 
\begin{equation}\label{-j}
\Res_{s=-j} \lp t^{-s} \Gamma(s) q_{-rs}(x,x) \rp = t^j \frac{(-1)^j}{j!} \cdot q_{rj}(x,x).
\end{equation}
Therefore in this case, the asymptotic expansion of $h_t$ is the following:
\[h_t(x,x) \stackrel{t \searrow 0}{\sim} \sum_{j=0}^{n/2-1} t^{-\frac{n-2j}{2r}} \frac{\Gamma \lp \frac{n-2j}{2r} \rp}{\Gamma \lp \frac{n-2j}{2} \rp} \frac{a_j(x,x)}{r} + a_{\frac{n}{2}}(x,x) + \sum_{j=1}^{\infty} t^j \frac{(-1)^j}{j!} q_{rj}(x,x).  \]

\subsubsection*{The case when $n$ is even and $r=\frac{\alpha}{\beta}$ is rational with $(\alpha,\beta)=1$} Some of the coefficients $q_{rj}(x,x)$ from \eqref{-j} can be expressed in terms of the $a_{k}$'s from \eqref{asp_t}.  Remark that $\frac{\Gamma(s)}{\Gamma(rs)}$ has simple poles in $\{-1,-2,... \} \setminus \{ \frac{-1}{r}, \frac{-2}{r},... \}$. For $j \in \mathbb N^*$, $s:=-\frac{j}{r} \in \{-1,-2,...\}$ if and only if $j$ is a multiple of $\alpha$, which is equivalent to $s=\frac{-l\alpha}{r}=-l\beta$ for some $l \in \mathbb N^*$. In this case, we obtain:
\begin{align*}
\Res_{s=-l\beta} \lp t^{-s} \Gamma(s) q_{-rs}(x,x) \rp ={}&
\Res_{s=-l\beta} \lp t^{-s} \frac{\Gamma(s)}{\Gamma(rs)}  \Gamma(rs)q_{-rs}(x,x) \rp \\
={}& t^{l\beta} r \cdot  \frac{1}{r} a_{\frac{n}{2}+l\alpha}(x,x)=t^{l\beta} a_{\frac{n}{2}+l \alpha}(x,x).
\end{align*}
Hence for rational $r=\frac{\alpha}{\beta}$, if $j=l\beta$, $l \in \mathbb N^*$, we conclude that:
\begin{equation}\label{npar}
q_{rj}(x,x)=(-1)^j \cdot j! \cdot a_{\frac{n}{2}+l\alpha}(x,x), 
\end{equation}
and $h_t(x,x)$ has the following asymptotic expansion as $t \searrow 0$:
\begin{align*}
 \sum_{j=0}^{n/2-1} t^{-\frac{n-2j}{2r}} \frac{\Gamma \lp \frac{n-2j}{2r} \rp}{\Gamma \lp \frac{n-2j}{2} \rp} \frac{a_j(x,x)}{r} + a_{\frac{n}{2}}(x,x) + \sum^{\infty}_{\substack{j=1  \\ 
\mathclap{\beta \nmid j}}}\mkern-6mu t^j \frac{(-1)^j}{j!} q_{rj}(x,x) + \sum_{l=1}^{\infty}t^{l\beta} a_{\frac{n}{2}+l\alpha}(x,x).
\end{align*}

\subsection*{The case when $n$ is odd}
For $j \in \lbrace 0,1,...,(n-1)/2 \rbrace$, the coefficient of $t^{-\frac{n-2j}{2r}}$ is computed as in \eqref{n-2j}. Furthermore, in $s=0$:
\begin{align*}
\Res_{s=0} \lp t^{-s} \Gamma(s) q_{-rs}(x,x) \rp ={}& \Res_{s=0} \lp t^{-s} \frac{\Gamma(s)}{\Gamma(rs)} \cdot \Gamma(rs)q_{-rs}(x,x) \rp \\
={}& r \cdot \frac{-1}{r} \cdot \PP_{\Ker \DD}(x,x) =-\PP_{\Ker \DD}(x,x),
\end{align*}
hence there is no free term in the asymptotic expansion of $h_t$ as $t$ goes to zero. 

Now we have to compute the residues of the function $t^{-s} \Gamma(s) q_{-rs}(x,x)$ in $s \in \{ -1,-2,... \}$ and $ s \in \{ \frac{-1}{2r}, \frac{-3}{2r},...\}$. 
\subsubsection*{The case when $n$ is odd and $r$ is irrational} Then these sets are disjoint, thus all poles of the function $\Gamma(s) q_{-rs}(x)$ are simple.  For $j \in \mathbb N^*$, the coefficient of $t^j$ is obtained as in \eqref{-j}. Furthermore, for $j \in \mathbb N$, we get:
\begin{equation}\label{2j+1/2r}
\Res_{s=-\frac{2j+1}{2r}} \lp t^{-s} \frac{\Gamma(s)}{\Gamma(rs)} \cdot \Gamma(rs)q_{-rs}(x,x) \rp 
 =t^{\frac{2j+1}{2r}} \cdot \frac{\Gamma(-\frac{2j+1}{2r})}{\Gamma(-\frac{2j+1}{r})} \cdot \frac{a_{\frac{n+2j+1}{2}}(x,x)}{r}.
\end{equation}
Therefore the small-time asymptotic expansion of $h_t$ is the following:
\begin{align*}
h_t(x,x) \stackrel{t \searrow 0}{\sim}{}& \sum_{j=0}^{n/2-1} t^{-\frac{n-2j}{2r}} \cdot \frac{\Gamma \lp \frac{n-2j}{2r} \rp}{\Gamma \lp \frac{n-2j}{2} \rp} \cdot \frac{a_j(x,x)}{r} + \sum_{j=1}^{\infty} t^j \cdot \frac{(-1)^j}{j!} q_{rj}(x,x)  \\ +{}& \sum_{j=0}^{\infty} t^{\frac{2j+1}{2r}} \cdot \frac{\Gamma \lp -\frac{2j+1}{2r} \rp}{\Gamma \lp - \frac{2j+1}{2} \rp} \cdot \frac{a_{\frac{n+2j+1}{2}}(x,x)}{r}.
\end{align*}
\subsubsection*{The case when $n$ is odd and $r=\frac{\alpha}{\beta}$ is rational} Consider the sets:
\begin{align*}
A:=\{ -1,-2,... \}, &&
B:=\{ \tfrac{-1}{2r},\tfrac{-3}{2r},... \}, &&
C:=\{ \tfrac{-1}{r},\tfrac{-2}{r},... \}.   
\end{align*}
Remark that $A$ is the set of negative poles of $s \longmapsto t^{-s}\Gamma(s)q_{-rs}(x,x)$, and $A \setminus C$ is the set of poles of the function $s \longmapsto \frac{\Gamma(s)}{\Gamma(rs)}$. Clearly $B$ and $C$ are disjoint. Moreover, $A \cap C=\{ -l\beta: \ l \in \mathbb N^* \}$. Furthermore, if $\beta$ is odd, then $A\cap B=\emptyset$, and otherwise if $\beta$ is even, then $A\cap B=\{-l\frac{\beta}{2}: \ l \in 2 \mathbb N+1 \}$. Such an $s=-\frac{2j+1}{2r}=l\frac{\beta}{2} \in A \cap B$ is a double pole for $\Gamma(s)q_{rs}(x)$.  
\subsubsection*{Suppose that $\beta$ is odd} Then $A$ and $B$ are disjoint. Thus for $s=-\frac{2j+1}{2r} \in B$, $j \in \mathbb N$, the residue of $t^{-s}\Gamma(s)q_{rs}(x,x)$ is the one computed in \eqref{2j+1/2r}. 

For $s=-j \in A\setminus C$ (which means that $j \in \mathbb N^*$, $\beta \nmid j$), the residue of $t^{-s}\Gamma(s)q_{-rs}(x,x)$ in $s$ is the one computed in \eqref{-j}.

If $s=-l\beta=-\frac{l\alpha}{r} \in A\cap C$ for some $l \in \mathbb N^*$, then $\Gamma(s)$ has a simple pole in $s$ and by proposition \ref{poligammaq}, $q_{-rs}(x,x)$ vanishes at $s=-l\beta$. Hence the product $t^{-s}\Gamma(s)q_{-rs}(x,x)$ is holomorphic in $s=-l\beta$ and $t^{l\beta}$, $l \in \mathbb N^*$, does not appear in the asymptotic expansion.

Therefore if $r=\frac{\alpha}{\beta}$ is rational and $\beta$ is odd, we obtain:
\begin{align*}
h_t(x,x) \stackrel{t \searrow 0}{\sim}{}& \sum_{j=0}^{n/2-1} t^{-\frac{n-2j}{2r}} \cdot \frac{\Gamma \lp \frac{n-2j}{2r} \rp}{\Gamma \lp \frac{n-2j}{2} \rp} \cdot \frac{a_j(x,x)}{r} + \sum_{j=0}^{\infty} t^{\frac{2j+1}{2r}} \cdot \frac{\Gamma \lp -\frac{2j+1}{2r} \rp}{\Gamma \lp - \frac{2j+1}{2} \rp} \cdot \frac{a_{\frac{n+2j+1}{2}}(x,x)}{r} \\ +{}& \sum^{\infty}_{\substack{j=1  \\ 
\mathclap{\beta \nmid j}}}\mkern-6mu t^j \frac{(-1)^j}{j!} \cdot q_{rj}(x,x) .
\end{align*}

\subsubsection*{Assume now that $\beta$ is even} For $s=-\frac{2j+1}{2r}\in B \setminus A$ ($j \in \mathbb N $ with $\alpha\nmid 2j+1$), the residue is computed as in \eqref{2j+1/2r}. For $s=-j \in A\setminus \lp B\cup C \rp$ (namely $j \in \mathbb N^*$, $\frac{\beta}{2} \nmid j$), the residue is computed as in \eqref{-j}.

For $s \in C \cap A$ (namely $s=-l\beta$, $l \in \mathbb N^*$), the residue is again $0$. Indeed, $\Gamma(s)$ has a simple pole in $-l\beta$ and by proposition \ref{poligammaq}, $q_{-rs}(x,x)$ vanishes in $-l\beta$, thus $t^{l\beta}$ does not appear in the asymptotic expansion of $h_t$.

Finally, if $s=-\frac{l\alpha}{2r}=-l\frac{\beta}{2} \in A \cap B$, $l \in 2 \mathbb N +1$, then $s$ is a double pole for $\Gamma(s)q_{-rs}(x,x)$. We write the Laurent expansions of the functions $t^{-s}$, $\frac{\Gamma(s)}{\Gamma(rs)}$ and $\Gamma(rs)q_{-rs}(x,x)$ respectively   in $s=-\frac{l\alpha}{2r}=-l\frac{\beta}{2}=:-k$:
\begin{align*}
{}&t^{-s}=t^k-t^k \log t + \mathcal O (s+k)^2, \\
{}&\frac{\Gamma(s)}{\Gamma(rs)}=\frac{(-1)^k}{k! \cdot \Gamma(-kr)} (s+k)^{-1}+..., \\
{}&\Gamma(rs)(q_{-rs}(x,x))=\frac{1}{r}a_{\frac{n+l\alpha}{2}}(x,x) (s+k)^{-1}+... .
\end{align*} 
Thus we finally obtain that:
\begin{align*}
\Res_{s=-k}\lp t^{-s} \cdot \frac{\Gamma(s)}{\Gamma(rs)} \cdot \Gamma(rs)q_{-rs}(x,x)\rp={}& t^k \cdot \frac{(-1)^k}{k! \Gamma(-kr)} \cdot \FP_{s=-k} \lp \Gamma(rs)q_{-rs}(x,x) \rp  \\
{}&+ t^k \FP_{s=-k} \lp\frac{\Gamma(s)}{\Gamma(rs)} \rp \cdot\frac{a_{\frac{n+l\alpha}{2}(x,x)}}{r} \\
{}&+t^k \log t \cdot \frac{(-1)^k}{k! \Gamma(-kr)} \frac{a_{\frac{n+l\alpha}{2}(x,x)}}{r}.  \qedhere
\end{align*}
\end{proof}

\section{Non-triviality of the coefficients}
Let us prove theorem \ref{nontriv}. Recall the definition of  the zeta function of a non-negative self-adjoint generalized Laplacian $\Delta$:
\[ \zeta_{\Delta}(s):=\sum_{\lambda \in \Spec \Delta \setminus \{ 0 \}} \lambda^{-s}=\int_{M} q_{-s}(x,x)dg(x). \]
This series is absolutely convergent for $\Re s > \frac{n}{2}$ and extends meromorphically to $\mathbb C $ with possible simple poles in the set: 
\[ \left\{ \frac{n}{2}-j : j \in \mathbb N \setminus \left\{ \frac{n}{2} \right\}  \right\}\]
(see for instance \cite{gilkey}).

Consider the trivial bundle $\mathbb C$ over a compact Riemannian manifold $M$. As in \cite{loya}, let $\lp \DD + \xi \rp_{\xi > 0}$ be a family of generalized Laplacians indexed by $\xi >0$, and denote by $ q_{-s}^{\xi} $ the Schwartz kernels of the operators $(\DD+\xi)^{-s}$. Note that for $\Re s > \frac{n}{2}$
\begin{equation}\label{zeta}
\int_M q_{-s}^{\xi} (x,x) dx = \Tr \lp \DD +\xi \rp^{-s}=\zeta_{\DD+\xi}(s) = \sum_{\lambda_j \in \Spec \DD} \lp \lambda_j +\xi \rp^{-s}. 
\end{equation}
Since for $\Re s > \frac{n}{2}$ the sum is absolutely convergent, we obtain: 
\[ \frac{d}{d\xi} \zeta_{\DD+\xi} (s) =-s \cdot \sum_{\lambda_j \in \Spec \DD} \lp \lambda_j +\xi \rp^{-s-1}= -s\cdot \zeta_{ \DD +\xi}( s+1). \]
By induction, it follows that for $\Re s > \frac{n}{2}$:
\begin{equation}\label{mer}
\frac{d}{d\xi^k} \zeta_{\DD+\xi}(s)=(-1)^k s(s+1)...(s+k-1) \cdot \zeta_{\DD +\xi}( s+k)
\end{equation} 
Using the identity theorem, \eqref{mer} holds true on $\mathbb C$ as an equality of meromorphic functions. Consider $s \in \mathbb R \setminus (-\mathbb N)$ and $k \in \mathbb N$ large enough such that $s+k > \frac{n}{2}$. Since $\zeta_{\DD+\xi}(s+k)$ is a convergent sum of strictly positive numbers, the right hand side is non-zero. Thus for any fixed $s \in \mathbb R \setminus (- \mathbb N) $, on any open set $U \subset \mathbb (0, \infty)$, the function $\xi \longmapsto \zeta_{\DD+\xi}(s)$ is not identically zero on $U$, and by \eqref{zeta}, $q_{-s}^{\xi}(x,x)$ cannot be constant zero on $M$. Hence for $s=-rj \notin - \mathbb N$, there exist $\xi_0 \in (0, \infty)$ and $x_0 \in M$ such that the coefficient $q_{rj}^{\xi_0}(x_0,x_0)$ of the asymptotic expansion of the Schwartz kernel $h_t$ of $e^{-t (\DD+\xi_0)^r}$ is non-zero.  

Now suppose that $rj \in \mathbb N$. Then $r=\frac{\alpha}{\beta}$ is rational and $j$ is a multiple of $\beta$, $j:=l\beta$. If $n$ is odd, we already proved in theorem \ref{asdiag} that $t^{l\beta}$ does not appear in the asymptotic expansion of $h_t$ as $t \searrow 0$. Furthermore, if $n$ is even, by \eqref{npar}, $q_{rj}(x,x)$ is a non-zero multiple of the coefficient $a_{\frac{n}{2}+l\alpha}(x,x)$ in the asymptotic expansion \eqref{asp_t} of the heat kernel $p_t$. It is well-known that the heat coefficients in \eqref{asp_t} are non-trivial, see for instance \cite{gilkey}. It follows that all coefficients obtained in theorem  \ref{asdiag} indeed appear in the asymptotic expansion, proving theorem \ref{nontriv}. 

\section{Non-locality of the coefficients $q_{rj(x)}$ in the asymptotic expansions}\label{nonlocal}
Let us prove theorem \ref{nonloc}. We give an example of an $n$-dimensional manifold and a Laplacian for which the coefficients $q_{rj}(x,x)$, $j=\overline{1,\infty}$, $rj \notin \mathbb N$ appearing in theorem \ref{asdiag} are not locally computable in the sense of definition \ref{def1} $i)$. Let $M=\mathbb R^n / \lp 2\pi \mathbb Z \rp^n$ be the $n$-dimensional product Riemannian manifold from example \ref{example}. Let $\DD_g$ be the Laplacian on $M$ given by the metric $g=d\theta_1^2 +...+ d\theta_n^2$. 

Remark that the eigenvalues of $\DD_g$ are $\{ k_1^2 +...+k_n^2 : k_1,...,k_n \in \mathbb Z   \}$.  Let $ \varphi_l(t)= \frac{1}{\sqrt{2\pi}} e^{il t} $ be the standard orthonormal basis of eigenfunctions of each $\DD_{S^1}$. Then for $\Re s > \frac{n}{2}$ and $\theta=(\theta_1,...,\theta_n) \in M$, the Schwartz kernel of $\DD_g^{-s}$ is given by:
\[ q_{-s}^{\DD_g} \lp \theta , \theta  \rp = \sum_{ (k_1,...,k_n) \in \mathbb Z^n \setminus  \{ 0 \} } \lp k_1^2+...+k_n^2 \rp^{-s} \varphi_{k_1}(\theta_1) \overline{\varphi_{k_1}(\theta_1)}... \varphi_{k_n}(\theta_n) \overline{\varphi_{k_n}(\theta_n)} . \]
Consider the $n$-dimensional zeta function
 \[ \zeta_n(s):=  \sum_{(k_1,...,k_n) \in \mathbb Z^n \setminus  \{ 0 \}} \lp k_1^2 +...+k_n^2 \rp^{-s}= \sum_{k \in \mathbb N^*} k^{-s} R_n(k), \]
where $R_n(k)$ is the number of representations of $k$ as a sum of $n$ squares.
Since $\varphi_{l}(t) \overline{\varphi_{l}(t)} =\frac{1}{2\pi}$ for any $t \in S^1$, it follows that:
\begin{equation}\label{m1}
  q_{-s}^{\DD_g} \lp \theta,\theta  \rp = \frac{1}{(2 \pi)^n} \zeta_n(s), 
\end{equation}
for any $\Re s > \frac{n}{2}$, and clearly $q_{-s}^{\DD_g}$ is independent of $\theta$.

Now let us change the metric locally on each component $S^1$. Let $U$ be an open interval in $S^1$, and $\psi:S^1 \longrightarrow [0,\infty)$ a smooth function with $\supp \psi \subset U$. Consider the new metric $\lp 1+\psi(\theta) \rp d\theta^2$ on each $S^1$. Then there exist $p >0$ and an isometry $\Phi: \lp S^1,  \lp 1+\psi(\theta) \rp d\theta^2 \rp \longrightarrow \lp S^1,  p^2 d\theta^2 \rp$. Remark that the Laplacian on $S^1$ given by the metric $p^2 d \theta^2$ corresponds under this isometry to $p^{-2}$ times the Laplacian for the metric $d\theta^2$. Let $\tilde{g}= \sum_{j=1}^n \lp 1+ \psi(\theta_j) \rp d\theta_j^2$ and $g_p=\sum_{j=1}^n p^2 d\theta_j^2$. Then clearly $\Phi \times...\times \Phi : (M, \tilde{g}) \longmapsto (M,  g_p) $ is an isometry, and let $\tilde{\Delta}$, $\Delta_{p }$ be the corresponding Laplacians on $M$. Denote by $q_{-s}^{\tilde{\Delta}}$ and $q_{-s}^{\Delta_{p }}$ the Schwartz kernels of the complex powers $ \tilde{\Delta} ^{-s}$ and $\Delta_{p }^{-s}$. We have for $\Re s >\frac{n}{2}$
\begin{equation}\label{ma}
 q_{-s}^{\DD_{p }} \lp \theta, \theta \rp = \frac{1}{(2 \pi p)^n} \sum_{k=(k_1,...,k_n) \in \mathbb Z^n \setminus  \{ 0 \}} \lp p^{-2}k_1^2 +...+p^{-2}k_n^2 \rp^{-s} = \frac{p^{2s}}{(2 \pi p)^n} \zeta_n(s) .
\end{equation}  
Remark that
\[  q_{-s}^{\DD_{p}} \lp \theta,\theta  \rp =q_{-s}^{\tilde{\DD}} \lp \Phi(\theta),\Phi(\theta) \rp,   \]
and both of them are independent of $\theta$. By \eqref{ma}, for $\Re s>\frac{n}{2}$, we obtain
\begin{equation}\label{m2}
q_{-s}^{\tilde{\DD}} \lp \theta,\theta \rp =  \frac{p^{2s-n}}{(2 \pi )^n}  \zeta_n(s) .
\end{equation}
Now we prove that $\zeta_n(s)$ has a meromorphic extension on $\mathbb C$ with so-called trivial zeros at $s=-1,-2,...$. By proposition \ref{mellin}, for $\Re s > \frac{n}{2}$ we have
\[  \zeta_n(s) \Gamma(s)=\int_{0}^{\infty} t^{s-1} \sum_{k=(k_1,...,k_n) \in \mathbb Z^n \setminus \{ 0 \}} e^{-t(k_1^2+...+k_n^2)}  dt = \int_0^{\infty} t^{s-1} F(t) dt,  \]
where $F(t):= \sum_{k=(k_1,...,k_n) \in \mathbb Z^n \setminus \{ 0 \}} e^{-t(k_1^2+...+k_n^2)}$. Using the multidimensional Poisson formula (see for instance \cite{bal}), it follows that:
\[ 1+F(t)= \sum_{k \in \mathbb Z^n} f_t(k)=\sum_{k \in \mathbb Z^n} \hat{f_t}(2 \pi k)=\pi^{n/2} t^{-n/2}  \lp 1 + F \lp \frac{\pi^2}{t} \rp \rp,   \]
and therefore
\[  F(t)=-1+\pi^{n/2}t^{-n/2} + \pi^{n/2} t^{-n/2} F \lp  \frac{\pi^2}{t}  \rp.   \]
Since $F(t)$ goes to $0$ rapidly as $t \to \infty$, the function $A(s)= \int_{1}^{\infty} t^{s-1} F(\pi t)dt  $
is entire. Remark that
\begin{align*}
 \zeta_n(s) \Gamma(s)={}&  \int_{0}^{\pi} t^{-s} F(t)dt + \int_{\pi}^{\infty} t^{s-1}F(t) dt  \\
={}& \pi^s \lp -\frac{1}{s} + \frac{1}{s-\frac{n}{2}} + A\lp \frac{n}{2}-s \rp + A(s)\rp,
\end{align*} 
so 
\begin{equation}
\pi^{-s} \zeta_n(s) \Gamma(s)= -\frac{1}{s} + \frac{1}{s-\frac{n}{2}} + A\lp \frac{n}{2}-s \rp + A(s).
\end{equation}
Therefore $\zeta_n$ extends meromorphically to $\mathbb C$ with a simple pole in $s=\frac{n}{2}$ and zeros at $s=-1,-2,...$. Furthermore, since the RHS is invariant through the involution $s \mapsto \frac{n}{2}-s$, it follows that $\zeta_n(s)$ does not have any other zeros for $s \in (-\infty,0)$. We obtain the well-known functional equation of the Epstein-zeta function:
\[ \pi^{-s} \zeta_n(s) \Gamma(s)= \pi^{s-n/2} \zeta_n \lp \frac{n}{2}-s  \rp \Gamma \lp \frac{n}{2}-s \rp,  \]
see for instance \cite[Eq.\ (63)]{chan}.

For $r \in (0,1)$ and $j=\overline{1,\infty}$ with $rj \notin \mathbb N$, $\zeta_n (-rj )$ is not zero, hence by \eqref{m1} and \eqref{m2} we get:
\[ q_{rj}^{\DD_g}(\theta, \theta) \neq q_{rj}^{\tilde{\DD}}(\theta,\theta).  \]
Since we modified the metric locally in $U^n \subset M$ and the corresponding kernel $q_{rj}^{\tilde{\DD}}$ changed its behavior globally, it follows that it is not locally computable in the sense of definition \ref{def1} $i)$. 

Furthermore, let us see that the coefficients $q_{rj}$ for $j=\overline{1,\infty}$, $rj \notin \mathbb N$ are not cohomologically local in the sense of definition \ref{def1} $iii)$. We argue by contradiction. Let $j$ be fixed. Suppose that there exists a function $C$ locally computable in the sense of definition \ref{def1} $i)$ such that:
\begin{align*}
 \int_{S^1\times...\times S^1} q_{rj}^{\DD_g} \dvol_g = \int_{S^1 \times...\times S^1}   C(g) \dvol_g,  && \int_{S^1\times...\times S^1} q_{rj}^{\tilde{\DD}} \dvol_{\tilde{g}} = \int_{S^1 \times...\times S^1}   C(\tilde{g}) \dvol_{\tilde{g}}. 
\end{align*}
Using \eqref{m1} and \eqref{m2}, it follows that
\begin{align*}
(2 \pi)^n \zeta_n(-rj) = \int_{S^1 \times...\times S^1}   C(g) \dvol_g , &&
&{}  (2 \pi p)^n p^{-2rj} \zeta_{n}(-rj)= \int_{S^1 \times...\times S^1}   C(\tilde{g}) \dvol_{\tilde{g}}. 
\end{align*}
Remark that $C(g)=C(\tilde{g})=C(g_p)$ must be constant functions, thus we have
\[ \int_{S^1 \times...\times S^1}   C \dvol_{\tilde{g}} = \int_{S^1 \times...\times S^1}   C \dvol_{g_p}   , \]
and since $(S^1 \times...\times S^1 , g_p)$ is locally isometric to $(S^1 \times...\times S^1 , g)$, we get $\int_{S^1 \times...\times S^1}   C \dvol_{g_p}  =  p^n \int_{S^1 \times...\times S^1}   C \dvol_g$. Therefore
\[ (2 \pi p)^n p^{-2rj} \zeta_n(-rj) = p^n \cdot (2 \pi)^n \zeta_n(-rj).    \]
Since $rj \notin \mathbb N$, we proved above that $\zeta_n (-rj)$ does not vanish. We obtain a contradiction because $p^{-2rj} \neq 1$ for $r \in (0,1)$, $j=1,2,...$.

\section{Interpretation of $h_t$ on the heat space for $r=1/2$}\label{unified}
In theorems \ref{asafaradiag} and \ref{asdiag}, we studied the asymptotic behavior of the heat kernel $h_t$ of ${ \DD^r}$, $r \in (0,1)$ for small time $t$ in two distinct cases: when we approach $t=0$ along the diagonal in $M \times M$, and when we approach a compact set away from the diagonal. We now give a simultaneous asymptotic expansion formula for both cases when $r=\frac{1}{2}$. Furthermore, in order to understand the asymptotic behavior as $t$ goes to zero in \emph{any} direction (not just the case when $t$ goes to $0$ in the vertical one),we will pull-back the formula on a certain \emph{linear} heat space $\HSp$. 

In \cite{melrose}, Melrose used his blow-up techniques to give a conceptual interpretation for the asymptotic of the heat kernel $p_t$. Recall that the heat space $M_H^2$ is obtained by performing a parabolic blow-up of $ \{ t=0 \} \times \Diag $ in $[0,\infty) \times M \times M$. The heat space $M_H^2$ is a manifold with corners with boundary hypersurfaces given by the boundary defining functions $\rho$ and $\omega_0$. The heat kernel $p_t$ belongs to $\rho^{-n} \mathcal C^{\infty}(M_H^2)$, and vanishes rapidly at the boundary hypersurface $\{ \omega_0=0 \}$ (see \cite[Theorem~7.12]{melrose}). 

In order to study the Schwartz kernel $h_t$ of $e^{-t \DD^{1/2}}$, we introduce the \emph{linear heat space} $\HSp$, which is just the standard blow-up of $\{ 0 \} \times \Diag$ in $[0,\infty) \times M \times M$ (see \cite{melmaz} for details regarding the blow-up of a submanifold). Let $\ff$ be the \emph{front face}, i.e. the newly added face, and denote by $\lb$ the \emph{lateral boundary} which is the lift of the old boundary $\{ 0 \} \times M \times M$. The blow down map is given locally by:
\begin{align*}
 \beta_H: \HSp \longrightarrow [0,\infty) \times M \times M && \beta_H(\rho, \omega, x')=(\rho \omega_0, \rho \omega'+x', x'),
\end{align*}
where
\[ \omega \in  \mathbb S^n_H= \{ \omega= (\omega_0, \omega') \in \mathbb R^{n+1} : \ \omega_0 \geq 0, \ \omega_0^2+|\omega'|^2 =1 \}.  \]

\begin{theorem}\label{unified1}
If $n$ is even,  then the Schwartz kernel $h_t$ of the operator $e^{-t\DD^{1/2}}$ belongs to  $ \rho^{-n}\omega_0 \cdot  \mathcal C^{\infty} (\HSp) $. If $n$ is odd, $h_t \in \rho^{-n} \omega_0 \cdot \mathcal C^{\infty} (\HSp) + \rho \log \rho \cdot \omega_0 \cdot \mathcal C^{\infty} (\HSp) $.
\end{theorem}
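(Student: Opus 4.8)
The plan is to realise $h_t$ as a subordination integral of the ordinary heat kernel $p_u$ of $\DD$ and to transport the known structure of $p_u$ on Melrose's parabolic heat space $M_H^2$ onto the linear space $\HSp$. The starting point is the subordination identity
\[
e^{-t\sqrt\lambda}=\frac{t}{2\sqrt\pi}\int_0^\infty u^{-3/2}e^{-t^2/4u}e^{-u\lambda}\,du\qquad(\lambda\ge 0,\ t>0),
\]
which for $r=1/2$ is nothing but the inverse Mellin formula of proposition \ref{inversemellin} rewritten via the Legendre duplication formula and the transform $\frac{1}{2\pi i}\int_{\Re w=c}\Gamma(w)x^{-w}\,dw=e^{-x}$. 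Applying it spectrally on a Hilbert basis of $\DD$-eigensections, exactly as in proposition \ref{inversemellin}, gives at the level of Schwartz kernels
\[
h_t(x,y)-\PP_{\Ker\DD}(x,y)=\frac{t}{2\sqrt\pi}\int_0^\infty u^{-3/2}e^{-t^2/4u}\bigl(p_u(x,y)-\PP_{\Ker\DD}(x,y)\bigr)\,du .
\]
The contribution of $\PP_{\Ker\DD}$ and of the region $u\ge 1$ is, by the exponential decay of $p_u-\PP_{\Ker\DD}$ as $u\to\infty$ (as in proposition \ref{poliafaradiag}), smooth in $(t,x,y)\in[0,\infty)\times M\times M$ and divisible by $t$, hence lifts into $\rho\,\omega_0\cdot\mathcal C^\infty(\HSp)\subset\rho^{-n}\omega_0\cdot\mathcal C^\infty(\HSp)$; and away from the diagonal theorem \ref{asafaradiag} already gives $h_t\in t\cdot\mathcal C^\infty$, which near the lateral boundary $\lb$ (where $\omega_0$ is a defining function and $\rho$ is bounded away from $0$) lies in $\omega_0\cdot\mathcal C^\infty(\HSp)$. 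So everything reduces to the behaviour of the subordination integral near the front face $\ff$, i.e.\ for $u$ small and $(x,y)$ near the diagonal.

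There I would use Melrose's description of $p_u$ on $M_H^2$: in projective coordinates $(\sigma,Z,x')$ with $\sigma=\sqrt u$, $Z=\exp_{x'}^{-1}(x)/\sigma$ (geodesic normal coordinates around $x'$), one has $p_u(x,y)=(4\pi)^{-n/2}\sigma^{-n}e^{-|Z|^2/4}G(\sigma,Z,x')$ with $G$ smooth up to $\sigma=0$ and Schwartz in $Z$, and by theorem \ref{fi0} $G(\sigma,Z,x')\sim\sum_{i\ge0}\sigma^{2i}\Psi_i(\exp_{x'}(\sigma Z),x')$ as $\sigma\searrow0$. Introducing the projective coordinates $(\rho,\omega,x')$ on $\HSp$ with $\beta_H(\rho,\omega,x')=(\rho\omega_0,\exp_{x'}(\rho\omega'),x')$ — so $t=\rho\omega_0$, $\exp_{x'}^{-1}(x)=\rho\omega'$ and $t^2+d(x,y)^2=\rho^2(\omega_0^2+|\omega'|^2)=\rho^2$ — and then rescaling $\sigma=\rho v$, the near-$\ff$ part of the subordination integral becomes, up to the smooth remainder already accounted for,
\[
\frac{\omega_0}{\sqrt\pi}(4\pi)^{-n/2}\rho^{-n}\int_0^{1/\rho}v^{-n-2}e^{-1/4v^2}\,G(\rho v,\omega'/v,x')\,dv ,
\]
because the Gaussian $e^{-|Z|^2/4}$ combines with $e^{-t^2/4u}$ into $e^{-(t^2+d^2)/4\sigma^2}=e^{-1/4v^2}$. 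It then remains to show that this integral is polyhomogeneous conormal on $\HSp$, smooth when $n$ is even and in $\mathcal C^\infty(\HSp)+\rho^{n+1}\log\rho\cdot\mathcal C^\infty(\HSp)$ when $n$ is odd; multiplying by $\rho^{-n}\omega_0$ yields the theorem.

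For that last step I would insert the Taylor expansion $G(\sigma,Z,x')=\sum_{j=0}^N\sigma^jP_j(Z,x')+\sigma^{N+1}R_N$, where by theorem \ref{fi0} each $P_j$ is a polynomial in $Z$ of degree $\le j$ whose monomials all have degree $\equiv j\pmod{2}$ (the degree-$k$ Taylor part of $\Psi_i$ feeds into $P_{2i+k}$). For a monomial $c_\alpha(x')Z^\alpha$ of $P_j$, the substitution $Z=\omega'/v$ turns the corresponding contribution into $\rho^j c_\alpha(x')\,\omega'^\alpha\int_0^{1/\rho}v^{-n-2+j-|\alpha|}e^{-1/4v^2}\,dv$, and $w=1/2v$ reduces the one-dimensional integral to an incomplete Gaussian moment $\int_{\rho/2}^\infty w^{m}e^{-w^2}\,dw$ with $m=n-j+|\alpha|$. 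For $m\ge0$ this tends to a constant and differs from it by a smooth function of $\rho^2$, giving a $\mathcal C^\infty(\HSp)$ contribution; the borderline $m=-1$ gives $-\log\rho+\mathcal C^\infty$, and the negative odd integers $m$ likewise produce a $\log\rho$ (from the $\int w^{-1}\,dw$ term in the expansion of $e^{-w^2}$), while the other negative $m$ produce only powers of $\rho$ which, after the $\rho^j\omega'^\alpha$ and $\rho^{-n}\omega_0$ factors, sit in $\rho^{-n}\omega_0\cdot\mathcal C^\infty(\HSp)$. The crux is the parity matching: since $|\alpha|\equiv j\pmod{2}$ in $P_j$ one has $m\equiv n\pmod{2}$, so $m$ can be a negative odd integer — the only source of logarithms — exactly when $n$ is odd; in that case $m\le-1$ forces $j-|\alpha|\ge n+1$, hence $j\ge n+1$, so the logarithmic term carries a factor $\rho^j$ with $j\ge n+1$ and lies in $\rho^{n+1}\log\rho\cdot\mathcal C^\infty(\HSp)$; for $n$ even no logarithm occurs. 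The tail $\sigma^{N+1}R_N$, controlled by the uniform estimates for the remainder of $p_u$ on $M_H^2$, contributes an error of order $\mathcal O(\rho^{N+1-n}\log\rho)$ on $\HSp$, so letting $N\to\infty$ upgrades the term-by-term computation to the asserted polyhomogeneous expansion.

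The step I expect to be the main obstacle is precisely this last one: making the passage between the two blow-up spaces $M_H^2$ and $\HSp$ and the rescaling of the subordination integral fully rigorous, and in particular obtaining conormal control of the remainder $R_N$ — uniform in all derivatives and along $\ff$ and $\lb$ simultaneously — so that the formal identification of the smooth and logarithmic parts genuinely assembles into a polyhomogeneous conormal section on $\HSp$ rather than a mere pointwise small-time expansion. By contrast, the parity argument that isolates $n$ odd as the exact threshold for the appearance of $\log\rho$ is short once theorem \ref{fi0} is available.
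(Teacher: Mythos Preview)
Your proposal is correct and shares with the paper the subordination identity
\[
h_t(x,y)-\PP_{\Ker\DD}(x,y)=\tfrac{t}{2\sqrt\pi}\int_0^\infty u^{-3/2}e^{-t^2/4u}\bigl(p_u(x,y)-\PP_{\Ker\DD}(x,y)\bigr)\,du
\]
as its starting point, together with the immediate disposal of the $u\ge 1$ piece and of the kernel projection. The analysis near the front face, however, is carried out differently in the paper, in a way that sidesteps precisely the obstacle you flag at the end.

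Instead of lifting $p_u$ to $M_H^2$, rescaling $\sigma=\rho v$, Taylor-expanding $G(\sigma,Z,x')$ in $\sigma$, and invoking the parity $|\alpha|\equiv j\pmod 2$ of the polynomials $P_j(Z,x')$ to force $m\equiv n\pmod 2$ in the Gaussian moments, the paper keeps the classical expansion $p_T(x,y)=T^{-n/2}e^{-d(x,y)^2/4T}\sum_{j\le N}T^j a_j(x,y)+R_{N+1}$ with the $a_j(x,y)$ \emph{not} expanded in the transverse direction, and makes the single change of variable $u=\frac{t^2+d(x,y)^2}{4T}$ in the subordination integral. Each term then evaluates in closed form to
\[
\tfrac{t}{2\sqrt\pi}\,a_j(x,y)\,\Gamma\Bigl(\tfrac{n+1}{2}-j,\ \tfrac{t^2+d(x,y)^2}{4}\Bigr)\Bigl(\tfrac{t^2+d(x,y)^2}{4}\Bigr)^{-\frac{n+1}{2}+j}.
\]
The point of the linear blow-up is that $\beta_H^*\bigl(t^2+d(x,y)^2\bigr)=\rho^2$ and $\beta_H^*t=\rho\omega_0$, while $\beta_H^*a_j$ is smooth on $\HSp$; so everything reduces to the elementary small-$\xi$ behaviour of the incomplete Gamma function $\Gamma(z,\xi)$, which is in $\xi^z\mathcal C^\infty_\xi+\text{const}$ for $z\notin -\mathbb N$ and acquires a $\log\xi$ term exactly when $z\in\{0,-1,-2,\dots\}$. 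Since $z=\frac{n+1}{2}-j$ is half-integral for $n$ even and integral for $n$ odd, the logarithms appear iff $n$ is odd; your parity argument is replaced by this single observation. What this buys is that there is no passage between two blow-up spaces and no remainder on $M_H^2$ to control: the classical remainder $R_{N+1}(T,x,y)=\mathcal O(T^{N+1})$ makes its subordination integral manifestly $t\cdot\mathcal C^\infty_{t,x,y}$. Your route would also go through---the Schwartz-in-$Z$ estimates for $G$ and its Taylor remainder are available from the rapid vanishing of $p_u$ at the temporal face of $M_H^2$---but it trades the one-line reduction to $\Gamma(z,\xi)$ for an extra Taylor layer and a more delicate remainder argument.
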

\begin{proof}
First we deduce the unified formula for $h_t$ as $t \searrow 0$ both on the diagonal and away from it.  By Mellin formula \ref{mellin} and inverse Mellin formula \ref{inversemellin}, for $\tau>n$, we get:
\begin{align*}
h_t(x,y)-\PP_{\Ker \DD}(x,y)={}& \tfrac{1}{2\pi i} \int_{\Re s=\tau} t^{-s} \frac{\Gamma(s)}{\Gamma \left( \frac{s}{2} \right)} \Gamma \left(\frac{s}{2} \right)q_{-s/2}(x,y)ds \\
={}& \tfrac{1}{2\pi i} \int_{\Re s = \tau} t^{-s} \frac{\Gamma(s)}{\Gamma \left( \frac{s}{2} \right)} \int_{0}^{\infty} T^{\tfrac{s}{2}-1} \lp p_T(x,y)-\PP_{\Ker \DD}(x,y)  \rp dT ds.
\end{align*}
We use the Legendre duplication formula as in \cite{art} (see for instance \cite{mellin}):
\[ \frac{\Gamma(s)}{\Gamma\lp \frac{s}{2} \rp } = \frac{1}{\sqrt{2\pi}} 2^{s-\tfrac{1}{2}} \Gamma \lp \frac{s+1}{2} \rp, \]
obtaining that $h_t(x,y)-\PP_{\Ker \DD}(x,y)$ is equal to:
\begin{align*}
\tfrac{1}{\sqrt{4 \pi}} \tfrac{1}{2\pi i} \int_{\Re s = \tau} \int_{0}^{\infty}  \lp \frac{2 \sqrt{T}}{t} \rp^s \Gamma \lp \frac{s+1}{2} \rp \lp p_T(x,y)-\PP_{\Ker \DD}(x,y)  \rp dT ds.
\end{align*}
Set $X:=\tfrac{2\sqrt{T}}{t}$. Using propositions \ref{umbenzi}, \ref{rapgamma} and Fubini, we first compute the integral in $s$. Changing the variable $S=\frac{s+1}{2}$ and applying the residue theorem, we get:
\begin{align*}
\tfrac{1}{2\pi i} \int_{\Re s = \tau} X^s \Gamma \lp \frac{s+1}{2} \rp ds={}& \tfrac{2}{2\pi i} \int_{\Re S = \frac{\tau+1}{2}} X^{2S-1} \Gamma(S) dS 
= 2 \sum_{k=0}^{\infty} \frac{(-1)^k}{k!} X^{-2k-1} \\
={}& 2 X^{-1}e^{-X^{-2}}=\frac{t}{\sqrt{T}}e^{-\frac{t^2}{4T}}.
\end{align*}
Thus we obtain:
\begin{equation}\label{dif}
\begin{aligned}
h_t(x,y)-\PP_{\Ker \DD}(x,y)=  \tfrac{t}{2\sqrt{\pi}} \int_{0}^{\infty} T^{-3/2} e^{-\frac{t^2}{4T}} \lp p_T(x,y)-\PP_{\Ker \DD}(x,y) \rp dT
\end{aligned}
\end{equation}
Since $p_T(x,y)-\PP_{\Ker \DD}(x,y)$ decays exponentially as $T$ goes to infinity, it follows that the right-hand side of equation \eqref{dif} is of the form $t \cdot \mathcal C^{\infty}_{t,x,y} \lp [0,\infty) \times M^2 \rp$. Furthermore, by the change of variable $u=\tfrac{t}{2\sqrt{T}}$, we have:
\begin{align*}
-\tfrac{t}{2\sqrt{\pi}}\int_{0}^{1} T^{-3/2}e^{-\frac{t^2}{4T}} dT \cdot \PP_{\Ker \DD}(x,y)
={}&-\tfrac{2}{\sqrt{\pi}}\int_{t/2}^{\infty} e^{-u^2} du \cdot \PP_{\Ker \DD}(x,y).
\end{align*}
Since $ \int_{t/2}^{\infty} e^{-u^2}du$ tends to $\frac{\sqrt{\pi}}{2}$ as $t \searrow 0$, the term $-\tfrac{t}{2\sqrt{\pi}}\int_{0}^{1} T^{-3/2}e^{-\frac{t^2}{4T}} dT  \PP_{\Ker \DD}(x,y)$ will cancel in the limit as $t \to 0$ with $- \PP_{\Ker \DD}(x,y)$ from the left hand side of \eqref{dif}. 

Let us study the remaining integral term $\tfrac{t}{2\sqrt{\pi}} \int_{0}^{1} T^{-3/2}e^{-\frac{t^2}{4T}} p_T(x,y)dT$. By theorem \ref{fi0}, 
\begin{align*}
p_T(x,y)=T^{-n/2} e^{-\frac{d(x,y)^2}{4T}} \sum_{j=0}^N T^j a_j(x,y) + R_{N+1}(T,x,y), 
\end{align*}
where the remainder $R_{N+1}(T,x,y)$ is of order $\mathcal O (T^{N+1})$, therefore:
\begin{align*}
\tfrac{t}{2\sqrt{\pi}} \int_{0}^{1} T^{-3/2}e^{-\frac{t^2}{4T}} p_T(x,y)dT={}&
\tfrac{t}{2\sqrt{\pi}} \int_{0}^{1} T^{-3/2}e^{-\frac{t^2}{4T}} R_{N+1}(T,x,y)  dT \\  +{}& \tfrac{t}{2\sqrt{\pi}} \int_{0}^{1} T^{-3/2}e^{-\frac{t^2}{4T}} T^{-n/2}e^{-\frac{d(x,y)^2}{4T}} \sum_{j=0}^{N} T^j a_{j}(x,y)   dT.
\end{align*}
Since $R_{N+1}(T,x,y)$ is of order $\mathcal O (T^{N+1})$, the first integral is again of type $ t \cdot \mathcal C^{\infty}_{t,x,y} $.  By changing the variable $u=\tfrac{t^2+d(x,y)^2}{4T}$ in the second integral, we get:
\begin{align*}
{}&\tfrac{t}{2\sqrt{\pi}} \sum_{j=0}^N a_{j}(x,y)  \int_{0}^{1}  T^{-\frac{n+3}{2} +j}e^{-\frac{t^2+d(x,y)^2}{4T}} dT  \\
={}&   \tfrac{t}{2 \sqrt{\pi}} \sum_{j=0}^N a_j(x,y)  \lp \frac{t^2+d(x,y)^2}{4} \rp^{-\frac{n+1}{2}+j}  \int_{\frac{t^2+d(x,y)^2}{4}}^{\infty} u^{\frac{n+1}{2}-j-1} e^{-u}du \\
={}& \tfrac{t}{2 \sqrt{\pi}} \sum_{j=0}^N a_j(x,y) \Gamma \lp \frac{n+1}{2}-j, \frac{t^2+d(x,y)^2}{4} \rp \lp \frac{t^2+d(x,y)^2}{4} \rp^{-\frac{n+1}{2}+j},  
\end{align*} 
where $\Gamma(z,\xi):=\int_{\xi}^{\infty} u^{z-1}e^{-u}du$ is the upper incomplete Gamma function.
We conclude that $h_t(x,y)$ is equal to:
\begin{equation}\label{htt}
 t \cdot \mathcal C^{\infty}_{t,x,y} + \tfrac{t}{2 \sqrt{\pi}} \sum_{j=0}^N a_j(x,y) \Gamma \lp \frac{n+1}{2}-j, \frac{t^2+d(x,y)^2}{4} \rp \lp \frac{t^2+d(x,y)^2}{4} \rp^{-\frac{n+1}{2}+j}.
\end{equation}

\subsection*{The case when $n$ is even} If $z>0$, then one can easily check that $\Gamma(z,\xi) \in \xi^z \mathcal C^{\infty}_{\xi}[0,\epsilon) + \Gamma(z)$, for some $\epsilon>0$. Furthermore, for $z \in (-\infty,0] \setminus \lbrace 0,-1,-2,...\rbrace$,  
\begin{align*}
\Gamma(z,\xi)={}&-\frac{1}{z}\xi^ze^{-\xi}+\frac{1}{z} \Gamma(z+1,\xi) \\
={}&\xi^z e^{-\xi} \sum_{k=0}^{a-1} \frac{-1}{z(z+1)...(z+k)} \xi^k + \frac{1}{z(z+1)...(z+a)} \Gamma(z+a,\xi) \\
={}& \xi^z \mathcal C^{\infty}_{\xi}[0, \epsilon) + \frac{1}{z(z+1)...(z+a-1)} \Gamma(z+a,\xi),
\end{align*}
where $a$ is a positive integer such that $z+a>0$. Thus for a non-integer $z<0$,  we have:
\begin{align*}
 \Gamma(z,\xi)= \xi^z \mathcal C^{\infty}_{\xi} [0, \epsilon) + \frac{1}{z(z+1)...(z+a-1)}  \Gamma(z+a).
\end{align*} 
We want to interpret equation \eqref{htt} on the heat space $\HSp$,  thus we pull-back \eqref{htt} through $\beta_H$:
\begin{align*}
 \beta_H^*h={}& \rho \omega_0 \beta_H^* \mathcal C^{\infty}_{t,x,y} + \tfrac{1}{2 \sqrt \pi} \rho \omega_0 \sum_{j=0}^{N} \lp \tfrac{\rho^2}{4} \rp^{-\frac{n+1}{2}+j}   \beta_H^*a_j(x,y) \Gamma\lp \frac{n+1}{2}-j , \frac{\rho^2}{4} \rp \\
 ={}& \rho \omega_0 \beta_H^* \mathcal C^{\infty}_{t,x,y} + \tfrac{1}{2 \sqrt{\pi}} \rho^{-n}\omega_0  \sum_{j=0}^{n/2} \rho^{2j} 2^{n+1-2j} \beta_H^*a_j(x,y) \Gamma \lp \frac{n+1}{2}-j \rp  \\
 {}&+ \tfrac{1}{2 \sqrt{\pi}} \rho \omega_0 \sum_{j=0}^{n/2} \beta_H^* a_j(x,y) \mathcal C^{\infty}_{\rho^2}[0,\epsilon) + \tfrac{1}{2 \sqrt{\pi}} \rho\omega_0 \sum_{j=n/2 +1}^{N} \beta_H^* a_j(x,y) \mathcal C^{\infty}_{\rho^2}[0,\epsilon) \\
 {}&+\tfrac{1}{2 \sqrt{\pi}} \rho^{-n}\omega_0 \sum_{j=n/2+1}^{N} \rho^{2j} 2^{n+1-2j} \beta_H^*a_j(x,y) \frac{2^{-n/2+j}}{\lp n+1-2j  \rp \lp n+3-2j  \rp...(-1)} \Gamma \lp \frac{1}{2} \rp. 
\end{align*} 
Since $\Gamma \lp \frac{n+1}{2} -j \rp =  \frac{\sqrt{\pi} (n-2j-1)!!}{2^{n/2-j}}  $ for $j \in \{0,1,...,n/2 \}$, it follows that:
\begin{equation}\label{rich}
\begin{aligned}
\beta_H^* h={}& \rho \omega_0 \beta_H^* \mathcal C^{\infty}_{t,x,y} + \omega_0 \rho \mathcal C^{\infty}_{\rho^2}[0,\epsilon) +  \rho^{-n} \omega_0 \sum_{j=0}^{n/2}  \rho^{2j} 2^{n/2-j} (n-2j-1)!! \beta_H^* a_j(x,y)  \\
{}&+ \rho^{-n}\omega_0 \sum_{j=n/2+1}^{N} \rho^{2j} \frac{ (-1)^{j-n/2} 2^{n/2-j} }{(2j-n-1)!!} \beta_H^*a_j(x,y).
\end{aligned}
\end{equation}
The case $\rho \neq 0$ and $\omega_0 \to 0$ corresponds to $x \neq y$ and $t \searrow 0$ before the pull-back. We obtain that $\beta_H^*h$ is in $\mathcal C^{\infty}(\HSp)$ and it vanishes at first order on $\lb$, which is compatible with theorem \ref{asafaradiag}.

If $\rho \to 0$ and $\omega_0=1$, which corresponds to $x=y$ and $t \searrow 0$, then $\beta_H^*h=  \rho^{-n}\omega_0 \sum_{j=0}^{N} \rho^{2j} A_{j}(x)$, where we denoted by $A_j(x)$ the coefficients appearing in \eqref{rich}. Again, this result is compatible with theorem \ref{asdiag}, and moreover, the coefficients are precisely the ones from \cite[Theorem~3.1]{art}. 

Remark that formula \eqref{rich} is stronger than theorems \ref{asafaradiag} and \ref{asdiag}. If both $\rho$ and $\omega_0$ tend to $0$ (with different speeds), it describes the behavior of $h_t$ as $t$ goes to zero from any positive direction (not only the vertical one). 
 
\subsection*{The case when $n$ is odd} Remark that for small $\xi$, we have: 
\begin{align*}
\Gamma(0,\xi)={}&\int_{\xi}^{\infty} t^{-1}e^{-t} dt = \int_{\xi}^{1} \frac{e^{-t}-1}{t}  dt +\int_{\xi}^{1} t^{-1} dt + \int_{1}^{\infty}t^{-1}e^{-t}dt \\
={}&- \log \xi + \mathcal C^{\infty}_{\xi}[0,\epsilon).
\end{align*} 
Furthermore,  if $p$ is a negative integer, inductively we obtain:
\begin{align*}
\Gamma(-p,\xi)={}& \frac{e^{-\xi}\xi^{-p}}{p!} \sum_{k=0}^{p-1} (-1)^k (p-k-1)! \xi^k + \frac{(-1)^p}{p!}  \Gamma(0,\xi) \\
={}& \xi^{-p} \mathcal C^{\infty}_{\xi}[0,\epsilon)-\frac{(-1)^p}{p!} \log \xi +\mathcal C^{\infty}_{\xi}[0,\epsilon).  
\end{align*} 
We pull-back equation \eqref{htt} on the heat space $\HSp$:
\begin{align*}
 \beta_H^*h={}& \rho \omega_0 \beta_H^* \mathcal C^{\infty}_{t,x,y} + \tfrac{1}{2 \sqrt \pi} \rho \omega_0 \sum_{j=0}^{N} \lp \tfrac{\rho^2}{4} \rp^{-\frac{n+1}{2}+j}   \beta_H^*a_j(x,y) \Gamma\lp \frac{n+1}{2}-j , \frac{\rho^2}{4} \rp \\
 ={}& \rho \omega_0 \beta_H^*a_j(x,y)  + \tfrac{1}{2\sqrt{\pi}} \rho \omega_0 \sum_{l=0}^{(n-1)/2} \beta_H^* a_j(x,y) \mathcal C^{\infty}_{\rho^2}[0,\epsilon) \\
 {}&+\frac{1}{\sqrt{\pi}} \rho^{-n}\omega_0 \sum_{j=0}^{(n-1)/2} \rho^{2j} \beta_H^*a_j(x,y) 2^{n-2j} \Gamma \lp \frac{n+1}{2}-j \rp \\
 {}&+ \frac{2}{\sqrt{\pi}} \rho^{-n} \omega_0 \sum_{j=(n+1)/2}^{N} \rho^{2j} \log \rho \beta_H^{*} a_j(x,y) 2^{n-2j} \frac{(-1)^{j-\frac{n+1}{2}+1}}{\lp j-\frac{n+1}{2} \rp!} \\
{}&+ \frac{2}{\sqrt{\pi}} \rho^{-n} \omega_0 \sum_{j=(n+1)/2}^{N} \rho^{2j}  \beta_H^{*} a_j(x,y) 2^{n-2j} \frac{(-1)^{j-\frac{n+1}{2}}}{\lp j-\frac{n+1}{2} \rp!} \log 2  \\
{}&+\tfrac{1}{2\sqrt{\pi}} \rho \omega_0 \sum_{j=(n+1)/2}^{N} \beta_H^*a_j(x.y) \mathcal C^{\infty}_{\rho^2}[0,\epsilon) \\
{}&+ \tfrac{1}{ \sqrt{\pi}} \rho^{-n}\omega_0 \sum_{j=(n+1)/2}^{N} \rho^{2j} \beta_H^*a_j(x,y) 2^{n-2j} \frac{(-1)^{j-\frac{n+1}{2}}}{\lp j-\frac{n+1}{2} \rp!} \mathcal C^{\infty}_{\rho^2}[0,\epsilon) 
\end{align*}
Therefore, we obtain:
\begin{equation}\label{rich'}
\begin{aligned}
\beta_H^* h={}& \rho \omega_0 \beta_H^* \mathcal C^{\infty}_{t,x,y} + \omega_0 \rho \mathcal C^{\infty}_{\rho^2}[0,\epsilon) + \omega_0 \rho^{-n} \mathcal C^{\infty}_{\rho^2}[0,\epsilon)  \\
{}&+ \frac{1}{\sqrt{\pi}} \rho^{-n} \omega_0 \sum_{j=0}^{(n-1)/2} \rho^{2j} \beta_H^*a_j(x,y) 2^{n-2j} \lp \frac{n+1}{2}-j \rp ! \\
{}&+  \frac{2}{\sqrt{\pi}} \rho^{-n} \omega_0 \sum_{j=(n+1)/2}^{N} \rho^{2j} \log \rho \beta_H^{*} a_j(x,y) 2^{n-2j} \frac{(-1)^{j-\frac{n+1}{2}+1}}{\lp j-\frac{n+1}{2} \rp!} \\
{}&+ \frac{2}{\sqrt{\pi}} \rho^{-n} \omega_0 \sum_{j=(n+1)/2}^{N} \rho^{2j}  \beta_H^{*} a_j(x,y) 2^{n-2j} \frac{(-1)^{j-\frac{n+1}{2}}}{\lp j-\frac{n+1}{2} \rp!} \log 2.
\end{aligned}
\end{equation}

If $\rho \neq 0$ and $\omega_0 \to 0$ (corresponding to $x \neq y$ and $t \searrow 0$ before the pull-back on $\HSp$), we obtain that $\beta_H^* h \in \mathcal C^{\infty}(M_{heat})$ and it vanishes at order $1$ at $\lb$, which is compatible with the result of theorem \ref{asafaradiag}.

In the case $\rho \to 0$ and $\omega_0=1$ which corresponds to $x=y$ and $t \searrow 0$, we obtain $\beta_H^* h= \rho^{-n}\mathcal C^{\infty}_{\rho^2} + \rho^{-n}\sum_{j=0}^N \rho^{2j} A_j(x) + \rho^{-n} \sum_{j=(n+1)/2}^N \rho^{2j} \log \rho B_j(x)$, where we denoted by $A_j$ and $B_j$ the coefficients appearing in \eqref{rich'}.  This result is compatible with theorem \ref{asdiag} and again, we find some of the coefficients appearing in \cite[Theorem~3.1]{art}. 
\end{proof}

\section{The heat kernel as a polyhomogeneous conormal section}
Let us recall the notions of index family and polyhomogeneous conormal functions on a manifold with corners with two boundary hypersurfaces. (For an accessible introduction see \cite{grieser}, and for full details of the theory see \cite{mel92}.) A discrete subset $F \in \mathbb C \times  \mathbb N $ is called an \emph{index set} if the following conditions are satisfied:
\begin{itemize}
\item[1)] For any $N \in \mathbb R$, the set $F \cap \{  (z,p): \Re z < N  \} $ is finite.
\item[2)] If $p > p_0$ and $(z,p) \in F$, then $(z, p_0) \in F$.
\end{itemize}
If $X$ is a manifold with corners with two boundary hypersurfaces $B_1$ and $B_2$ given by the boundary defining functions $x$ and $y$, a smooth function $f$ on $\accentset{\circ}{X}$ is said to be \emph{polyhomogeneous conormal} with index sets $E$ and $F$ respectively, if in a small neighborhood $[0,\epsilon) \times B_1$, $f$ has the asymptotic expansion:
\[  f(x,y) \stackrel{x \searrow 0}{\sim} \sum_{(z,p) \in F} a_{z,p}(y)\cdot x^z \log^p x,  \]
where $a_{z,p}$ are smooth coefficients on $B_2$, and for each $a_{z,p}$ there exists a sequence of real numbers $b_{w,q}$ such that
\[  a_{z,p}(y) \stackrel{y \searrow 0}{\sim} \sum_{(w,q) \in E}b_{w,q} \cdot y^w \log^q y.\]

One can prove that $f$ is a polyhomogeneous conormal function on $X$ with index sets $F_p= \{ (k,0) : k \in \mathbb Z, k \geq -p  \}$ and $F_0=\{ (n,0) : n \in \mathbb N \}$ if and only if $f \in  y^{-p} \mathcal C^\infty(X)$. Furthermore $f$ is a polyhomogeneous conormal function on $X$ with index sets $F'=\{ (n,1) : n \in \mathbb N^* \}$ and $F_0$ if and only if $f \in  \mathcal C^\infty(X)+ \log y \cdot \mathcal C^{\infty}(X)$.
Therefore we can restate theorem \ref{unified1} as follows
\begin{theorem}
For $r=\frac{1}{2}$, the heat kernel $h_t$ of the operator $e^{-t \DD^{1/2}}$ is a polyhomogeneous conormal section on the linear heat space $\HSp$ with values in $\EE \boxtimes \EE^*$.  The index set for the lateral boundary is:
\[ F_{\lb} =\{ (k,0): k \in \mathbb N^* \}.  \]
If $n$ is even,  the index set of the front face is:
\[  F_{\ff}=\{(-n+k,0): k \in \mathbb N \},  \]
while for $n$ odd the index set towards $\ff$ is given by:
\[  F_{\ff}=\{ (-n+k,0): k \in \mathbb N \} \cup \{ (k,1) : k \in \mathbb N^* \}.  \]
\end{theorem}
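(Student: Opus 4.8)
The plan is to deduce this statement directly from Theorem~\ref{unified1} together with the dictionary, recalled just before the statement, between membership in weighted smooth function spaces on a manifold with corners and polyhomogeneity with prescribed index sets. Recall that on $\HSp$ the front face $\ff$ and the lateral boundary $\lb$ are cut out by the boundary defining functions $\rho$ and $\omega_0$ respectively, and that near the corner $\ff \cap \lb$ the pair $(\rho,\omega_0)$ forms part of a product coordinate system transverse to the two boundary hypersurfaces. Everything below is local and may be checked in a local trivialization of $\EE \boxtimes \EE^*$ pulled back to $\HSp$, so the $\EE\boxtimes\EE^*$-coefficients play no role in the argument.

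First I would treat the case $n$ even. By Theorem~\ref{unified1}, $h_t \in \rho^{-n}\omega_0 \cdot \mathcal{C}^\infty(\HSp)$, so one can write $h_t = \rho^{-n}\omega_0\, g$ with $g \in \mathcal{C}^\infty(\HSp)$. Applying Taylor's theorem to $g$ in a corner chart with coordinates $(\rho,\omega_0,\dots)$ yields a joint asymptotic expansion $h_t \sim \sum_{j,k\geq 0} c_{jk}\, \rho^{-n+j}\,\omega_0^{1+k}$, with $c_{jk}$ smooth along $\ff\cap\lb$; away from the corner one simply uses that $\omega_0$ (resp.\ $\rho$) is smooth and nonvanishing. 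This exhibits $h_t$ as polyhomogeneous conormal with index set $\{(-n+k,0): k\in\mathbb{N}\}$ towards $\ff$ and $\{(k,0): k\in\mathbb{N}^*\}$ towards $\lb$; the shift to $\mathbb{N}^*$ at $\lb$ comes precisely from the prefactor $\omega_0$, i.e.\ from the first-order vanishing already recorded in Theorem~\ref{asafaradiag}. This is the first of the two equivalences stated before the theorem, applied once in the $\rho$ variable with $p=n$ and once in the $\omega_0$ variable with $p=0$ after dividing out $\omega_0$.

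Next, for $n$ odd, Theorem~\ref{unified1} gives the additional summand lying in $\rho\log\rho\cdot\omega_0\cdot\mathcal{C}^\infty(\HSp)$. Writing it as $\rho\log\rho\cdot\omega_0\, h$ with $h\in\mathcal{C}^\infty(\HSp)$ and Taylor expanding $h$ at the corner produces terms $\rho^{1+j}\log\rho\cdot\omega_0^{1+k}$ with $j,k\geq 0$; thus this summand is polyhomogeneous with index set $\{(k,1): k\in\mathbb{N}^*\}$ towards $\ff$ (the logarithmic power never occurring at $\rho^{0}$, because of the extra factor $\rho$) and with index set $\{(k,0): k\in\mathbb{N}^*\}$ towards $\lb$, carrying no logarithm there. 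Taking the union of the index sets of the two summands gives $F_{\ff} = \{(-n+k,0): k\in\mathbb{N}\} \cup \{(k,1): k\in\mathbb{N}^*\}$ and $F_{\lb} = \{(k,0): k\in\mathbb{N}^*\}$, as claimed; the logarithm-free powers $(k,0)$ contributed by the second summand are already contained in the first index set since $n\geq 1$. This is the content of the second equivalence recalled before the theorem.

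I do not expect a genuine obstacle: the substance of the result is Theorem~\ref{unified1}, and the present statement merely repackages it in the language of index families. The only step beyond pure translation is the verification that the two single-hypersurface characterizations combine correctly into a joint index family at the corner $\ff\cap\lb$, which follows at once from Taylor's theorem in a product chart; one should also note for consistency that the reading of $F_{\lb}$ obtained here agrees with the first-order vanishing at $\lb$ given by Theorem~\ref{asafaradiag}.
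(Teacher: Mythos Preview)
Your proposal is correct and follows exactly the paper's approach: the paper presents this theorem as a direct restatement of Theorem~\ref{unified1} via the dictionary between weighted smooth function spaces and polyhomogeneity with given index sets, without giving any further proof. Your write-up supplies the Taylor-expansion details at the corner that the paper leaves implicit, but the logic is identical.
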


It seems reasonable to expect that the Schwartz kernel $h_t$ of the operator $e^{-t\DD^r}$ for $r \in (0,1)$ can be lifted to a polyhomogeneous conormal section in a certain ``transcendental" heat space $M^r_{Heat}$ depending on $r$ with values in $\EE \boxtimes \EE^*$. However, already in the case $r=1/3$ our method leads to complicated computations involving Bessel modified functions. We therefore leave this investigation open for a future project.

\end{document}